\theoremstyle{plain}
  \newtheorem{theorem}{Theorem}[section]
  \newtheorem{lemma}{Lemma}[section]
  \newtheorem{corollary}{Corollary}[section]
  \newtheorem{definition}{Definition}[section]
  \newtheorem{remark}{Remark}[section]
   \newcommand{\beqn}{\begin{eqnarray}}
   \newcommand{\eeqn}{\end{eqnarray}}
   \newcommand{\beqs}{\begin{eqnarray*}}
   \newcommand{\eeqs}{\end{eqnarray*}}
   \newcommand{\ban}{\begin{eqnarray*}}
   \newcommand{\nan}{\end{eqnarray*}}
   \newcommand{\beq}{\begin{equation}}
   \newcommand{\eeq}{\end{equation}}
   \newcommand{\p}{\partial}
\newcommand{\Lm}{{\Omega^*}}
\newcommand{\Om}{\Omega}
\newcommand{\pom}{{\p\Om}}
\newcommand{\bom}{{\overline\Om}}
\renewcommand{\det}{\mbox{det}}
  \newcommand{\dist}{\mbox{dist}}
  \newcommand{\diam}{{\mbox{diam}}}
  \newcommand{\Vol}{{\mbox{Vol}}}
\newcommand{\R}{\mathbb{R}}
\numberwithin{equation}{section}
  \numberwithin{equation}{section}
  \numberwithin{figure}{section}
\begin{document}

\title[Regularity of free boundaries in optimal transportation]{\textbf{Regularity of free boundaries in optimal transportation}}

\author[S. Chen]
{Shibing Chen}
\address{School of Mathematical Sciences,
University of Science and Technology of China,
Hefei, 230026, P.R. China}
\email{chenshibing1982@hotmail.com}

\author[J. Liu]
{Jiakun Liu}
\address
	{%Institute for Mathematics and its Applications, 
	School of Mathematics and Applied Statistics,
	University of Wollongong,
	Wollongong, NSW 2522, Australia}
\email{jiakunl@uow.edu.au}

\thanks{This work was supported by the Australian Research Council DP170100929 and DP200101084}

\subjclass[2000]{35J96, 35J25, 35B65.}

\keywords{Optimal transportation, Monge-Amp\`ere equation, free boundary}

%    \thanks will become a 1st page footnote.
\thanks{}

%\subjclass[2000]{Primary 53C44, 35K55}

\date{\today}

\dedicatory{}

\keywords{}
 \begin{abstract}

In this paper, we obtain some regularities of the free boundary in optimal transportation with the quadratic cost. 
When $f, g \in C^\alpha$, and $\partial\Omega, \partial\Omega^*\in C^{1,1}$ are convex and far apart, by adopting our recent results on boundary regularity of Monge-Amp\`ere equations \cite{CLW1},we shows that the free boundaries are $C^{2,\alpha}$. 
Moreover, we obtain analogous regularities of the free boundary in an optimal transport problem with two separate targets. 
\end{abstract}

\maketitle

\baselineskip=16.4pt
\parskip=3pt

\section{introduction}
Let $\Omega, {\Omega^*} \subset\R^n$ be two bounded and convex domains.
Given a pair of $L^1$ density functions $f, g$ supported on $\Om, \Lm$ respectively, the optimal partial transport problem asks for the most economical way to transport a fraction 
\beq\label{mass}
m\leq \min\left\{\int_\Omega f(x)\,dx, \ \ \int_{\Omega^*} g(y)\,dy\right\}
\eeq 
of the mass of $f$ to $g$. 
Here the cost per unit mass transported is given by the quadratic distance squared function $c(x,y)=|x-y|^2/2$. 
More precisely, let $\Gamma_\leq(f,g)$ denote the set of nonnegative Borel measures $\gamma$ on $\R^n\times\R^n$ satisfying
 	$$\gamma(A\times \mathbb{R}^n) \leq \int_{A} f(x)\,dx, \quad \gamma(\mathbb{R}^n \times A) \leq \int_{A} g(y)\,dy$$ 
for any Borel set $A\subset\R^n$. 
The optimal partial transport problem is to minimise the cost functional
\begin{equation} \label{mini}
\mathcal{C}(\gamma) := \int_{\mathbb{R}^n \times \mathbb{R}^n} c(x,y)\, d\gamma(x,y)
\end{equation}       
among all $\gamma\in\Gamma_\leq(f,g)$ of fixed mass $\gamma(\R^n\times\R^n)=m$. 

In a pioneering work \cite{CM}, Caffarelli and McCann obtained the existence and uniqueness of the minimiser of \eqref{mini}. 
In particular, the optimal measure $\gamma$ of \eqref{mini} can be characterised by the minimiser $(u,v)$ of the following problem
\beq\label{dual mini}
\inf_{\substack{\psi(x)+\phi(y)\geq\langle x,y \rangle \\ \psi(x)\geq h(x), \ \ \phi(y)\geq h(y)}} \int_{\R^n}\psi(x)f(x)\,dx + \int_{\R^n}\phi(y)g(y)\,dy,
\eeq
where $h(x)=(|x|^2-\lambda)/2$ and $\lambda$ is a constant depending on $m$. 
Since $u$ can always be replaced by $\tilde u=\max\{v^*, h\}$ without increasing \eqref{dual mini}, where $v^*$ is the Legendre transform
\beq\label{lgdre}
v^*(x) := \sup_{y\in\R^n} \left\{x\cdot y - v(y)\right\},
\eeq
one may assume $u, v$ are convex functions. 
Then the optimal measure $\gamma$ of \eqref{mini} is supported on the graphs of $\nabla u$ over $U:=\{x\in\Om : u(x) > h(x)\}$, namely
\beqs
\gamma\left(\left\{(x, \nabla u(x)) : x\in U\right\}\right)=m=\int_U f(x)\,dx. 
\eeqs
The domain $U$ contains all mass $m$ been transported, and the hypersurface $\p U\cap\Omega$ dividing $\Om$ into two parts is called the \emph{free boundary}. 
Similarly, by duality one has $V := \{y\in\Lm : v(y)>h(y)\}$, and the free boundary $\p V\cap\Lm$ in the target as well. 
The domains $U\subset\Om$ and $V\subset\Lm$ are called active domain and active target, respectively. 
%Since the densities $f=0$ in $\R^n\setminus \Om$ and $g=0$ in $\R^n\setminus \Lm$, we also denote $U\cap\Om, V\cap\Lm$ by $U, V$ occasionally.  

In fact, $\nabla u$ is the optimal transport map from $f\chi_U+g(1-\chi_V)$ onto $g$ with a convex target $\Lm$, interior regularity and strict convexity of $u$ inside the active domain follows from Caffarelli's results \cite{C91, C92}.
The boundary regularity of $u$ is much harder to tackle, but is more attractive and has drawn lots of attention in recent years. 
An important fact is that the regularity of $u$ up to the free boundary $\p U\cap\Om$ implies the regularity of the free boundary itself in the sense that $\nabla u$ gives the direction of the normal to $\p U\cap\Om$.

Recall that for the complete transport problem, namely $m=\|f\|_1=\|g\|_1$, to show $u\in C^{1,\alpha}(\overline\Om)$ one needs both $\Om$ and $\Lm$ to be convex, see Caffarelli \cite{C92b}.
To show $u\in C^{2,\alpha}(\overline\Om)$ one further needs $\Om, \Lm$ to be uniformly convex with certain smoothness, see Caffarelli \cite{C96}, Delano\"e \cite{D91}, and Urbas \cite{U1}. 
Even though recently we reduced the uniform convexity assumption to convexity \cite{CLW1} (see also \cite{CLW2, SY} for dimension two and \cite{CLW3, J} for small perturbation of convex domains), the above boundary theory cannot be applied directly to the partial transport problem since $U$ and $V$ generally fail to be convex. 

For the partial transport problem \eqref{mass}, assuming $\Om$ and $\Lm$ are strictly convex and separated by a hyperplane, Caffarelli and McCann \cite{CM} proved that $u$ is $C^{1,\alpha}$ up to $\p U\cap\Om$, and thus obtained the $C^{1,\alpha}$ regularity of the free boundary $\p U\cap\Om$.  
When $\Omega$ and ${\Omega^*}$ are allowed to have overlap, Figalli \cite{AFi2,AFi} proved that away from the common region $\Omega\cap{\Omega^*},$ the free boundary is locally $C^1$, and this result was later improved by Indrei \cite{I} to local $C^{1,\alpha}$ away from the common region and up to a relatively closed singular set. 
 
%The strict convexity of domains is required in \cite{CM}, namely if a segment $\ell\subset\overline\Om$ there always exists an interior point $x$ of $\Om$ such that $x\in\ell$, which enables the argument of \cite{C92} to apply.  
\vskip5pt
By examining the proof in \cite{CM} carefully, the strict convexity condition on the domains can actually be removed.

\begin{theorem}\label{thm1}
Let $\Omega,$ ${\Omega^*}$ be two bounded, convex domains separated by a hyperplane. The densities $f, g$ are bounded away from zero and infinity. 
Let $m$ be the mass transported satisfying \eqref{mass}. Let $U\subset\Om$ and $V\subset\Lm$ be the active domain and target, respectively.
Then, there exists a constant $\alpha\in(0,1)$ such that the free boundary $\p U\cap\Om$ is $C^{1,\alpha}$ in the interior of $\Omega$.
\end{theorem}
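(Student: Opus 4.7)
The plan is to revisit Caffarelli--McCann's argument in \cite{CM} and isolate each instance where strict convexity of $\Om, \Lm$ is used, replacing each such step with one that relies only on convexity and on locality to the interior of $\Om$. Following \cite{CM}, from \eqref{dual mini} one produces convex potentials $u, v$ satisfying $u=\max\{v^*,h\}$ and $v=\max\{u^*,h\}$, and identifies $U=\{u>h\}\cap\Om$, $V=\{v>h\}\cap\Lm$. Since $\nabla h(x)=x$, at a free boundary point $x\in\p U\cap\Om$ one has $\nabla u(x)=x$, which by the separation lies outside $\Lm$. In particular $\nabla u|_U$ is the optimal transport from $f|_U$ onto $g|_V$, and all constraints coming from $\partial \Om$, $\partial \Lm$ enter only in the form of free boundary conditions linking $\p U \cap \Om$ and $\p V \cap \Lm$.

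The first substantive step is to show, using only the hyperplane separation, that the free boundary $\p U \cap \Om$ is the graph of a convex function in the direction $e$ orthogonal to the separating hyperplane, and similarly for $\p V \cap \Lm$. This follows from convexity of $u-h$: along any line parallel to $e$, the restriction of $u-h$ is a nonnegative convex function, so its zero set is a single (possibly empty) half-line. This graph property, together with convexity of $\Om$ and $\Lm$, upgrades $U$ and $V$ to convex subsets and does not use strict convexity of $\Om, \Lm$.

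With $U$ and $V$ both convex, Caffarelli's interior strict convexity and $C^{1,\alpha}$ theory \cite{C91, C92} applies on $U$ with convex target $V$, giving $u \in C^{1,\alpha}_{\mathrm{loc}}(U)$. The key remaining step is to upgrade this to $C^{1,\alpha}$ regularity up to the portion $\p U \cap \Om$ of the free boundary. For $x_0 \in \p U \cap \Om$, pick $r>0$ with $B(x_0, 2r)\subset\Om$; then $\p U \cap B(x_0,2r)$ is a convex graph mapped homeomorphically by $\nabla u$ onto a convex graph piece of $\p V \cap \Lm$, and the latter sits at positive distance from $\p\Lm$ by the separation. Caffarelli's local boundary $C^{1,\alpha}$ estimate \cite{C92b} for Monge--Amp\`ere equations on a convex domain with convex target then applies inside $B(x_0,r)$, yielding $u \in C^{1,\alpha}$ up to $\p U \cap B(x_0,r)$; no information about $\p\Om$ or $\p\Lm$ is used in this localisation.

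Finally, the $C^{1,\alpha}$ regularity of $u$ up to $\p U \cap \Om$, together with the graph structure, yields $C^{1,\alpha}$ regularity of the free boundary itself, since the graph can be read off from the level set structure of $u-h$ along the direction $e$. The main obstacle is the upgrade step: one must verify that the appeals to strict convexity in \cite{CM} are genuinely confined to excluding the pathology of the free boundary touching $\p\Om$ or $\p\Lm$ on a set of positive measure. This pathology is automatically absent at points in the interior of $\Om$, so once it is identified, the remaining arguments in \cite{CM} transfer verbatim to the merely convex setting considered here.
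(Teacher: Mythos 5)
Your proposal has a fatal flaw: the claim that $U$ and $V$ are convex. The paper states explicitly that ``the above boundary theory cannot be applied directly to the partial transport problem since $U$ and $V$ generally fail to be convex,'' and this is the reason the entire analysis in \S3 is needed. Your argument for convexity does not hold up: $u-h$ is a \emph{difference} of two convex functions (since $h(x)=(|x|^2-\lambda)/2$ is itself convex), so the restriction of $u-h$ to a line parallel to $e$ is not a convex function of the parameter, and even for a genuinely convex nonnegative function the zero set on a line is an interval, not a half-line. What the separation \emph{does} give (via the interior ball condition, Lemma \ref{intball}) is that the free boundary is a Lipschitz graph with a one-sided curvature bound — it contains large tangent balls \emph{inside} $U$ — which is a semi-concavity condition on the graph, the opposite of what you would need for $U$ to be convex.

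The second problem is your localisation step. You assert that $\nabla u$ maps $\p U\cap B(x_0,2r)$ onto a piece of $\p V\cap\Lm$ ``at positive distance from $\p\Lm$.'' This is false: as recorded in the paper (``free boundary never maps to free boundary''), $\nabla u$ maps $\p U\cap\Om$ to the \emph{fixed} boundary $\p V\cap\p\Lm$, which lies on $\p\Lm$ itself. So you cannot escape into Caffarelli's \emph{interior} theory; you are genuinely solving a boundary problem where the image of the free boundary touches $\p\Lm$, and this is precisely what makes the problem hard. The paper's route is quite different: it proves $C^1$ regularity of $u$ up to the free boundary (Lemma \ref{C1lem}) by a contradiction argument controlling the contact set $\{v=0\}$ — its boundedness (Claim 1), the existence of extreme points of $\{v=0\}$ in $\p V$ with approximating gradients tending to $0$ (Claim 2), a qualitative interior ball at such points (Claim 3) — and then applies a centred-section/doubling argument in the style of Caffarelli. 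The $C^{1,\alpha}$ upgrade then comes from a localisation lemma (Lemma \ref{1loc}) that replaces the one use of strict convexity in \cite{CM} and enables the geometric decay of centred sections, i.e.\ $p$-uniform convexity of $v$ up to $\p V\cap\p\Lm$. None of this assumes convexity of $U$ or $V$, and the bulk of the work is in making the boundary analysis go through when the target boundary $\p\Lm$ is merely convex rather than strictly convex.
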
 
\begin{remark}
 \emph{This result is essentially due to Caffarelli and McCann \cite{CM}. In this paper we include some details to show how the strict convexity condition on the domains can be replaced by the usual convexity.
The approach also works for the case when the domains have overlap as considered in \cite{AFi2,AFi}, which leads to that the free boundary is $C^{1,\alpha}$ in the interior of $\Omega$ and away from the common region.}
\end{remark}
 
As mentioned by Caffarelli and McCann \cite{CM}, higher order regularity of the free boundary remains open in the partial transfer case, which indeed turns out to be extremely difficult.  
In this paper, we establish higher order regularity of the free boundary under the condition that $\Omega$ and ${\Omega^*}$ are sufficiently far apart.
As far as we know, this is the first result towards higher order regularity of free boundary in optimal partial transport problem.

\begin{theorem}\label{thm2}
In addition to the hypotheses of Theorem \ref{thm1}, assume that $\p\Om, \p\Lm\in C^{1,1}$ and denote $d:=\dist(\Omega, {\Omega^*})$.

\noindent $i)$ When $f, g\in C^0$, for any $\beta\in(0,1)$, there exists a constant $d_\beta$ such that if $d>d_\beta$, the free boundary $\partial U\cap \Omega$ is $C^{1,\beta}$. 

\noindent $ii)$ When $f, g\in C^{\alpha}$ for some $\alpha\in(0,1)$, there exists a constant $d_\alpha$ such that if $d>d_\alpha$, the free boundary $\partial U\cap \Omega$ is $C^{2,\alpha}$.

\noindent $iii)$ When $\Omega, {\Omega^*}, f, g$ are smooth, the free boundary is $C^\infty$ in the interior of $\Omega$, provided $d$ is sufficiently large.
\end{theorem}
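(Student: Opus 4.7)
The plan is to upgrade the $C^{1,\alpha_0}$ free-boundary regularity from Theorem~\ref{thm1} by exploiting the rigidity that large separation $d$ imposes, reducing the problem to a boundary regularity question for the Monge--Amp\`ere equation on near-convex $C^{1,1}$ domains, to which our recent results \cite{CLW1} apply.

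First I would invoke Theorem~\ref{thm1} and its dual to get $\p U \cap \Om$ and $\p V \cap \Lm$ are $C^{1,\alpha_0}$ for some $\alpha_0 \in (0,1)$. Next I would establish a geometric rigidity for large $d$. For $x_0 \in \p U \cap \Om$, set $y_0 := \lim_{x\to x_0,\,x\in U}\nabla u(x)$; then $y_0 \in \p V \cap \Lm$, and combining $u(x_0) = h(x_0)$, $v(y_0) = h(y_0)$ with the contact identity $u(x_0)+v(y_0)=\langle x_0,y_0\rangle$ forces $|y_0-x_0|^2 = 2\lambda$. Since $x_0\in\Om$ and $y_0\in\Lm$, the unit vector $(y_0-x_0)/\sqrt{2\lambda}$ lies within $o(1)$ (as $d\to\infty$) of the separating direction $e$. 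Moreover, since $u-h$ vanishes on $\p U \cap \Om$ and is positive inside $U$, its interior trace $\nabla(u-h)(x_0) = y_0-x_0$ is parallel to the inward normal of $\p U \cap \Om$ at $x_0$; hence the outward normal equals $-e+o(1)$ uniformly, and symmetrically for $\p V \cap \Lm$. In particular, both free boundaries are nearly flat, and $U, V$ are close (in $C^{1,\alpha_0}$-norm) to convex slices of $\Om, \Lm$ by hyperplanes perpendicular to $e$.

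With this rigidity in hand, $u$ solves the Monge--Amp\`ere equation $\det D^2 u = f/g(\nabla u)$ in $U$ with second boundary condition $\nabla u(\p U) = \p V$, where both $U$ and $V$ are almost-convex $C^{1,1}$ domains. Our results \cite{CLW1} provide $C^{2,\alpha}$ boundary regularity for such Monge--Amp\`ere problems on convex (not necessarily uniformly convex) $C^{1,1}$ domains when $f, g \in C^\alpha$. I would apply \cite{CLW1} to the genuinely convex reference problem obtained by replacing the free boundaries with their limiting hyperplanes, and then deduce the estimate for the original problem by a perturbation argument controlled by the smallness of $o(1)$. This yields $u \in C^{2,\alpha}(\ol U)$; combined with $\nabla u(\p U \cap \Om) \subset \p V \cap \Lm$ and the $C^{1,1}$ regularity of $\p V$, it follows that $\p U \cap \Om$ is $C^{2,\alpha}$, proving part (ii). Part (i) uses the analogous $C^{1,\beta}$ version of \cite{CLW1} valid for $f, g \in C^0$, and part (iii) follows by a standard Schauder bootstrap.

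The principal obstacle is the coupling of the two unknown free boundaries, linked rigidly through the transport, together with the fact that neither $U$ nor $V$ is genuinely convex. The large-$d$ hypothesis is used precisely to overcome both: once both free boundaries are $o(1)$-close to flat hyperplanes, the full problem is an $o(1)$-perturbation of the convex reference problem, and \cite{CLW1} applies stably. Tracking how the constants in \cite{CLW1} depend on this perturbation size determines the thresholds $d_\alpha$ and $d_\beta$ in the statement.
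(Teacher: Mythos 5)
Your high-level strategy — exploit the far-apart condition to force both free boundaries to be nearly flat, then treat the problem as a perturbation of a "reference" optimal transport problem with flat free boundaries — is exactly the strategy of the paper. Your geometric rigidity observation (including the nice identity $|y_0 - x_0|^2 = 2\lambda$, though note that $y_0$ lies on the \emph{fixed} boundary $\partial V \cap \partial\Omega^*$ rather than the free boundary, so $v(y_0) = h(y_0)$ is not automatic; the near-alignment of $(y_0-x_0)/|y_0-x_0|$ with $e$ already follows from $\Omega, \Omega^*$ being far apart) is essentially what the paper establishes in \eqref{gq1}–\eqref{gq2}.

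The genuine gap is in the sentence "I would apply \cite{CLW1} to the genuinely convex reference problem obtained by replacing the free boundaries with their limiting hyperplanes." The limiting domains $U_\infty = \{x^n > 0\} \cap \Omega$ and $V_\infty = \{y^n < 0\} \cap \Omega^*$ are convex but are \emph{not} $C^{1,1}$ — they have a corner along $\{x^n = 0\} \cap \partial\Omega$ (resp. $\{y^n=0\} \cap \partial\Omega^*$) where the flat slice meets the fixed boundary. The paper explicitly flags this: "none of the above applies directly to the limit case … because the domains $U_\infty, V_\infty$ are neither uniformly convex nor even $C^1$ smooth." To get regularity in the limit case one must first prove an \emph{obliqueness} estimate (Lemma~\ref{obliqueness}) — the inner normal to $\partial U_\infty$ at $Dv_\infty(y_0)$ is not tangent to $e_n$, and $Dv_\infty(y_0)$ lands strictly inside the relatively open fixed boundary, hence away from the corner — together with a uniform density property for sections at the flat boundary. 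Only with these estimates in hand does the scheme of \cite{CLW1} carry over (Lemma~\ref{limsm}). You cannot cite \cite{CLW1} as a black box; you would need to re-derive the obliqueness and density inputs in this non-smooth setting. A secondary issue is that your "perturbation argument controlled by the smallness of $o(1)$" glosses over the actual mechanism: the paper compares $v$ to a solution $w$ of a constant-RHS Monge--Amp\`ere problem on a symmetrized domain, and runs a quantitative iteration at dyadic scales; the $C^{1,\beta}$ case (part i) uses the iteration scheme from \cite{CF}, not a direct application of \cite{CLW1}. These details matter because the perturbation must be shown to improve geometrically at each scale to close the argument.
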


\begin{remark}
\emph{
One can also consider the optimal partial transport problem with other cost functions. 
For example, when the cost $c=\frac{1}{p}|x-y|^p$ for some $p>1$, as $\dist(\Omega, {\Omega^*})\rightarrow \infty$,
the optimal transport map between active regions $U$ and $V$ will be close to the optimal transport map between some limiting domains 
$U_\infty$ and $V_\infty$ with the cost $\frac{1}{2}|x-y|^2$ (see \cite{CGN}). 
Our approach may also be adopted to study this problem. 
The main difference is that in general one has no $C^{1,\alpha}$ estimates of the free boundary priorly. 
In the special case when the cost function satisfies the Ma-Trudinger-Wang condition (A3) \cite{MTW}, a local $C^{1,\alpha}$ regularity of the free boundary was obtained in \cite{CI}.
We hope to investigate higher regularity for this problem in a separate work. 
}
\end{remark}

This paper is organised as follows. 
In \S\ref{S2} we introduce some useful notations and results in the optimal partial transport problem. 
In \S\ref{S3} we obtain the $C^1, C^{1,\alpha}$ regularities of the free boundary, and prove Theorem \ref{thm1}.
In \S\ref{S4} we obtain higher order regularities of the free boundary, and prove Theorem \ref{thm2}.
In \S\ref{S5} we introduce a related model of free boundary arising in an optimal transport problem, where the target contains two seperate parts. A more general version of this problem  was also investigated by Kitagawa and McCann in \cite{KM1}, where the $C^{1,\alpha}$ regularity of free boundary was also proved independently there.
 As an application of our argument, we also establish corresponding higher order regularities of the free boundary in this problem, see Theorem \ref{maint1}.

\begin{remark}
\emph{
Very recently in \cite{CLWfree}, when the target $\p\Lm\in C^2$ is uniformly convex, we are able to remove the ``far-apart" condition in Theorem \ref{thm2}, and obtain the $C^{2,\alpha}$ regularity of free boundary, provided the densities $f, g$ are $C^\alpha$ smooth. 
}
\end{remark}

%\vskip 10pt
 
 \section{Preliminaries and notations}\label{S2}
 In the following, we will always assume the densities $f, g$ are bounded from below and above by some positive constants.
For a fixed $m$ satisfying \eqref{mass}, it was shown in \cite{CM} that $\gamma_m,$ the minimiser of \eqref{mini}, is characterised by 
\beq\label{optmeas}
\gamma_m:=(Id \times T_m)_{\#}f_m=(T_m^{-1} \times Id)_{\#} g_m,
\eeq
 where $T_m$ is the optimal transport map from the active domain $U\subset \Omega$ to the active target $V\subset \Lm$, 
 the functions $f_m=f \chi_U$ and $g_m=g\chi_V$. 
 Indeed, $T_m=Du$ for some convex potential function $u$ solving
 \begin{equation} \label{push}
(Du)_{\#}(f_m+(g-g_m))= g
\end{equation}
with a convex target ${\Omega^*}.$
And by the interior regularity and strict convexity of $u$ \cite{C91, C92}, one has
\beq \label{homo int}
Du : U\to V \mbox{ is a homeomorphism between active interiors.}
\eeq

Similarly, $T_m^{-1}=Dv,$  for some convex function $v$ solving
$$ (Dv)_{\#}((f-f_m)+g_m) = f, $$
with a convex target $\Om$.
By \cite[Lemma 2]{C92} we can extend $u, v$ globally to $\R^n$ as follows
\beq\label{newv1}
\tilde u(x) = \sup\{L(x) : L \mbox{ affine, support of $u$ at some }x_0\in ({\Omega^*}\setminus V)\cup U\}
\eeq
\beq\label{newv}
\tilde v(x) = \sup\{L(x) : L \mbox{ affine, support of $v$ at some }x_0\in (\Om\setminus U)\cup V\}.
\eeq
For brevity, we still denote by $u, v$ the extensions $\tilde u, \tilde v$.  
Let $$v^*(x):=\sup_{y\in \mathbb{R}^n} x\cdot y-v(y),\ \text{for}\ x\in \bar{\Omega}$$
$$u^*(y):=\sup_{x\in \mathbb{R}^n} y\cdot x-u(x),\ \text{for}\ y\in \bar{{\Omega^*}}$$
be the standard Legendre transform of $u, v.$
The following two facts are very important for our argument:\\
1, $u(x)=v^*(x)$ for any $x\in U,$ $v(y)=u^*(y)$ for any $y\in V$,\\
2, $Dv(x)=x$ for a.e $x\in \Omega\setminus U,$ hence $v^*=1/2|x|^2+C$ on each connect component of $\Omega\setminus \bar{U}.$
Similarly, $u^*=1/2|x|^2+C$ on each connect component of ${\Omega^*}\setminus \bar{V}.$

Then, $v$ is a globally Lipschitz convex solution of
\beq\label{Asolv} 
	C_1(\chi_{\Om\setminus U}+\chi_V) \leq \det D_{ij}v \leq C_2 (\chi_{\Om\setminus U}+\chi_V),
\eeq
in the sense of Alexandrov, where $C_1, C_2$ are positive constants depending on the upper and lower bounds of $f, g$. 
 
In general, given a convex function $v : \R^n\to(-\infty, \infty]$ we define its associated \emph{Monge-Amp\`ere measure} $M_v$ on $\R^n$ by
\beq\label{MAmeas}
M_v(B) := \Vol[\p v(B)]
\eeq
for every Borel set $B\subset\R^n$. 
If $v$ is smooth and strictly convex, then
$$ M_v(B) = \int_B \det [D^2v(x)] \,dx. $$ 
The inequality \eqref{Asolv} is interpreted in the above measure sense, namely $\det D_{ij}v = f$ if
$$ M_v(B) = \int_B f $$
for every Borel set $B\subset\R^n$.
Hence, \eqref{Asolv} implies that the Monge-Amp\`ere measure $M_v$ is actually supported and bounded on $(\Om\setminus U)\cup V$. 

\vskip5pt 
Next, we recall the \emph{interior ball condition} obtained in \cite{CM}, which will be useful in our subsequent analysis. 
\begin{lemma}\label{intball}
Let $x\in U$ and $y=Du(x)$, then 
	\[ \Om\cap B_{|x-y|}(y) \subset U. \]
Likewise, let $y\in V$ and $x=Dv(y)$, then
	\[ \Lm\cap B_{|x-y|}(x) \subset V. \]
\end{lemma}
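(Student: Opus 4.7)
The plan is to compare the supporting affine function of $u$ at $x$ with the obstacle $h(z)=(|z|^2-\lambda)/2$, and to observe that their difference is, up to a factor of $1/2$, exactly the quantity $|x-y|^2-|z-y|^2$, which is what forces a Euclidean ball to appear.

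First I would use that $y=Du(x)\in\partial u(x)$ is a subgradient of the convex function $u$ at $x$, yielding the supporting inequality $u(z)\geq u(x)+\langle y,z-x\rangle$ for every $z\in\R^n$. Because the feasibility condition $u\geq h$ holds pointwise by the dual constraint in \eqref{dual mini}, one may replace $u(x)$ by the smaller value $h(x)$ on the right-hand side to obtain the (still valid) affine minorant
\begin{equation*}
u(z)\;\geq\;\tilde L(z)\;:=\;h(x)+\langle y,z-x\rangle,\qquad z\in\R^n.
\end{equation*}

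The second step is a short algebraic identity, using the explicit quadratic form of $h$:
\begin{equation*}
\tilde L(z)-h(z)\;=\;\frac{|x|^2-|z|^2}{2}+\langle y,z-x\rangle\;=\;\frac{1}{2}\bigl(|x-y|^2-|z-y|^2\bigr).
\end{equation*}
Therefore $\tilde L(z)>h(z)$ is equivalent to $|z-y|<|x-y|$, i.e.\ to $z\in B_{|x-y|}(y)$. Combining with the previous display, for any $z\in\Om\cap B_{|x-y|}(y)$ one obtains $u(z)\geq \tilde L(z)>h(z)$, and the definition $U=\{z\in\Om:u(z)>h(z)\}$ delivers $z\in U$, which is the first inclusion.

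The second inclusion follows by the symmetric argument applied to $v$: the roles of $u,x,y$ are played by $v,y,x$, with the subgradient relation $x=Dv(y)\in\partial v(y)$ coming from the Fenchel equality $u(x)+v(y)=\langle x,y\rangle$ (valid whenever $y=Du(x)$), and with the dual constraint $v\geq h$ providing the analogous lower bound. I do not anticipate any genuine obstacle here; the lemma's only substantive content is the recognition that the quadratic obstacle $h$ combines with a linear support of $u$ to cut out a Euclidean ball, a rigidity feature peculiar to the quadratic cost.
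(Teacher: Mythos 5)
Your proof is correct. The paper does not supply a proof of Lemma \ref{intball}---it is recalled from Caffarelli and McCann \cite{CM}---but your argument is exactly the intended mechanism: the supporting plane of $u$ at $x$, lowered so its constant term is $h(x)$ rather than $u(x)$ (legitimate because the dual constraint in \eqref{dual mini} gives $u\geq h$), differs from the quadratic obstacle $h$ by precisely $\frac{1}{2}\bigl(|x-y|^2-|z-y|^2\bigr)$, so the region $U=\{u>h\}\cap\Om$ must contain $B_{|x-y|}(y)\cap\Om$. One small clean-up: for the second inclusion there is no need to route through a Fenchel equality; the hypothesis $x=Dv(y)$ already gives $x\in\partial v(y)$ directly, and $v\geq h$ is simply the second constraint in \eqref{dual mini}, so the argument is the same with the roles of $(u,x,U,\Om)$ and $(v,y,V,\Lm)$ interchanged. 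You might also note that since $x\in U$ means $u(x)>h(x)$ strictly, keeping $u(x)$ rather than lowering it to $h(x)$ would give the inclusion for the closed ball as well, though the open-ball statement is what the lemma asserts and what your weaker bound delivers.
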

 
When $u$ is $C^1$ up to the free boundary $\p U\cap\Om$, one can see that \cite{CM} the unit inner normal of $\partial U\cap \Omega$ is given by
 \begin{equation}\label{normalformular}
 \nu(x)=\frac{Du(x)-x}{|Du(x)-x|},\quad \mbox{ at } x\in\p U\cap\Om.
 \end{equation}
Hence, the regularity of $u$ up to the free boundary $\partial U\cap \Omega$ implies the regularity of the free boundary itself.
By duality, the $C^{1,\alpha}$ regularity of $u$ up to $\partial U\cap \Omega$ actually follows from a quantified strict convexity of $v$ up to the fixed boundary $\p V\cap\p\Lm$, see \cite{C92b,CM}.

Useful elements in investigating the convexity and regularity of the convex function $v$ on the boundary are the centred sections and sub-level sets, see \cite{C92b,C96}.
\begin{definition}\label{defS}
Let $v$ be the above convex function, extended in \eqref{newv}. 
Let $y_0\in V$ and $h>0$ small. We denote 
\beq\label{sect}
	S^c_{h}[v](y_0) := \left\{y\in\R^n : v(y)< v(y_0) + (y-y_0)\cdot x+ h\right\}
\eeq
as the \emph{centred section} of $v$ with height $h$, where $x\in \R^n$ is chosen such that the centre of mass of $S^c_{h}[v](y_0)$ is $y_0$. 
Also, we denote
\beq\label{sub}
S_h[v](y_0) : =\left\{y\in V : v(y) < \ell_{y_0}(y) + h\right\}
\eeq
as the \emph{sub-level set} of $v$ with height $h$, where $\ell_{y_0}$ is a support function of $v$ at $y_0$. 
\end{definition}

\begin{remark}\label{reeq}
\emph{
If $v$ is strictly convex up to the boundary, we actually have an equivalency relation between its sub-level sets $S_h[v](y_0)$ and centred sections $S^c_{h}[v](y_0)$, that is for all small $h>0$,
\beq\label{equi0} 
S^c_{b^{-1}h}[v](y_0) \cap V \subset S_h[v](y_0) \subset S^c_{bh}[v](y_0) \cap V,
\eeq
where $b\geq1$ is a constant independent of $h$. 
For the proof of \eqref{equi0}, we refer the reader to \cite{C96} and \cite[Lemma 2.2]{CLW1}.
}
\end{remark}

%\vskip 10pt

\section{$C^1$ and $C^{1,\alpha}$ regularities}\label{S3}

Recall that in \cite[\S6]{CM} by assuming $\Lm$ is strictly convex, Caffarelli and McCann showed that $u$ is $C^1$ up to the free boundary $\p U\cap \Om$ in the sense that there exists $\tilde u\in C^1(\R^n)$ agrees with $u$ on $U\cap\Om$ and $D\tilde u(\R^n)=\overline\Lm$.  
Their idea was that by modifying the function $v$ to be $+\infty$ outside $\overline\Lm$, the desired extension $\tilde u$ is then the Legendre transform of $v$ satisfying $\p\tilde u(\R^n)\subset\overline\Lm$.  
If $\tilde u$ was not differentiable at some $x\in \R^n$, there would exist two different points $p_1, p_2\in\overline\Lm$ such that its Legendre transform $v$ coincides with an affine function on the segment $\ell=\overline{p_1p_2}\subset\overline\Lm$.
From the strict convexity of $\Lm$, the segment $\ell$ must contain an interior point of $\Lm$. This contradicts the strict convexity of $v$ inside $\Lm$, \cite{C92}. Hence $\tilde u\in C^1(\R^n)$. 
Under the strict convexity assumption on domains, in \cite[\S7]{CM} they further obtained $u$ is $C^{1,\alpha}$ up to the free boundary. 

In this section, we remove the strict convexity assumption on domains, and prove that $u\in C^{1,\alpha}$ up to the free boundary.

\begin{lemma}\label{C1lem}
Let $x_0\in\p U\cap\Om$. 
There exists a unique $p\in\overline V$ such that for any sequence $y_k \in U$ converging to $x_0$, namely $\lim_{k\to\infty} y_k =x_0$, one has
	$ \lim_{k\to\infty} Du(y_k) = p. $
\end{lemma}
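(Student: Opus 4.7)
The plan is to argue by contradiction, adapting the strategy of Caffarelli--McCann \cite{CM} by replacing their use of strict convexity of $\Omega^*$ with the interior ball condition (Lemma \ref{intball}) combined with the strict convexity of $v$ on the open active target $V$ (Caffarelli's interior regularity \cite{C91, C92}). Existence of subsequential limits is automatic: since $Du(y_k) \in V \subset \overline{\Omega^*}$ is bounded, any sequence $\{Du(y_k)\}$ is precompact, and by upper semicontinuity of the convex subdifferential each limit lies in $\partial u(x_0)\cap \overline V$, where $u$ is regarded as the global convex extension from \eqref{newv1}.

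For uniqueness, suppose two sequences converging to $x_0$ produce distinct limits $p^{(1)} \neq p^{(2)} \in \overline V$. By convexity of $\partial u(x_0)$, the whole segment $\ell := [p^{(1)}, p^{(2)}]$ lies in $\partial u(x_0)$, and Legendre duality gives
\[
v(p) \;=\; x_0 \cdot p - u(x_0), \qquad p \in \ell,
\]
so $v$ is affine along $\ell$. It then suffices to produce a single point of $\ell$ inside $V$: since $v$ is strictly convex on $V$, $\partial v(p)$ would be single-valued at such a point, forcing $x_0 = Dv(p) \in U$ by \eqref{homo int} and contradicting $x_0 \in \partial U$. I would obtain such a point at the midpoint $p_m := (p^{(1)} + p^{(2)})/2$. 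Passing to the limit in the interior ball condition along both sequences yields
\[
\Omega^* \cap B_{r_i}(x_0) \subset \overline V, \qquad r_i := |x_0 - p^{(i)}|,\ i=1,2.
\]
Since $\Omega$ and $\Omega^*$ are separated by a hyperplane, $x_0$ sits strictly on the side opposite both $p^{(i)}$, and in particular does not lie on $\ell$; a triangle-inequality argument then yields $|x_0 - p_m| < r := \max(r_1, r_2)$, placing $p_m$ strictly inside $B_r(x_0)$. Furthermore, on $\ell$ the function $v - h = x_0\cdot p - u(x_0) - (|p|^2 - \lambda)/2$ is strictly concave and nonnegative at both endpoints $p^{(i)} \in \overline V$, so $v(p_m) > h(p_m)$; combined with $p_m \in \Omega^*$ this places $p_m \in V$, as required.

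The step I expect to be the main obstacle is verifying that $p_m \in \Omega^*$ rather than on $\partial \Omega^*$ in the absence of strict convexity of $\Omega^*$. If $\ell$ happens to lie on a flat piece of $\partial \Omega^*$, then $p_m$ would sit in $\partial V \cap \partial \Omega^*$ instead of in the open set $V$, and strict convexity of $v$ on $V$ would not directly apply. Excluding this case appears to require additional input exploiting the obstacle structure $V = \{v > h\}$ together with the separation $\dist(\Omega, \Omega^*) > 0$, to rule out that an interior point $x_0 \in \Omega$ is a subgradient of $v$ at a point of $\overline V \cap \partial \Omega^*$. This is the new ingredient that replaces the use of strict convexity of $\Omega^*$ in \cite{CM}.
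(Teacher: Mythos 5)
Your setup mirrors the paper's through duality: two distinct subgradients $p^{(1)}\neq p^{(2)}\in\partial u(x_0)\cap\overline V$ force $v$ to be affine on the segment $\ell$, and your observations that the midpoint satisfies $|x_0-p_m|<\max_i|x_0-p^{(i)}|$ (strict convexity of $t\mapsto|x_0-((1-t)p^{(1)}+tp^{(2)})|^2$) and that $(v-h)(p_m)>0$ (strict concavity of $v-h$ along $\ell$) are both correct. However, the crucial step you flag as the ``main obstacle'' --- showing $p_m\in\Omega^*$ rather than $p_m\in\partial\Omega^*$ --- is a genuine gap, and your suggested repair does not work. You propose to ``rule out that an interior point $x_0\in\Omega$ is a subgradient of $v$ at a point of $\overline V\cap\partial\Omega^*$'', but this situation cannot be ruled out: it is precisely the generic picture, since $Du$ maps the free boundary $\partial U\cap\Omega$ into the fixed boundary $\partial V\cap\partial\Omega^*$ (free boundary never maps to free boundary). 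Thus when $\Omega^*$ has a flat facet containing $\ell$, the midpoint lies in $\partial\Omega^*$, not in the open set $V$, and strict interior convexity of $v$ cannot be invoked.

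The paper's proof takes a structurally different path designed exactly to avoid this difficulty. Rather than working with the midpoint of $\ell$, it considers the full contact set $\mathcal{C}_0=\{v=0\}$ (after normalization), shows it is bounded, extracts an \emph{exposed} point $q_0\in\partial V$ of $\mathcal{C}_0$, and proves the key geometric fact (Claim 3) that $B_{\delta_0}(q_0)\cap\Omega^*\subset V$ for some $\delta_0>0$. This does \emph{not} place $q_0$ itself in $V$, and indeed the paper never attempts a pointwise strict-convexity contradiction. Instead, Claim 3 is used to establish a doubling property for the Monge--Amp\`ere measure near $q_0$, after which centred sections $S^c_\varepsilon(q_\delta)$ are normalised and a Caffarelli-type volume estimate is applied: the exposed-point geometry forces $\dist(q_0',\partial S'_\varepsilon)/\diam(S'_\varepsilon)\to 0$ while $|w'(q'_0)|\geq 1$, which is incompatible. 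This quantitative, normalisation-based contradiction is what replaces the qualitative midpoint-in-$V$ argument, and it is exactly the additional machinery needed to remove the strict-convexity hypothesis on $\Omega^*$. Your proof, as written, reproduces essentially the Caffarelli--McCann argument and would be complete under strict convexity of $\Omega^*$, but it does not close the case this lemma is specifically meant to cover.
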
 
	
\begin{proof}
Without loss of generality we may assume that $x_0$ is the origin. 
Suppose to the contrary that there exist two sequences $\{y_k\}$ and $\{z_k\}$ in $U$ converging to the origin, but $Du(y_k)\to p_1$, $Du(z_k)\to p_2$ with $p_1\neq p_2$.
From \eqref{homo int}, one has $p_1, p_2\in\p V\cap \p\Lm$, and by duality $0\in\p v(p_1) \cap \p v(p_2)$. 
Subtracting a constant to the dual potential $v$, we have $v\geq 0= \min v$ and the segment
\beq\label{segin} 
\overline{p_1p_2} \subset \{v=0\} =: \mathcal{C}_0. 
\eeq
In order to derive a contradiction, we first observe some geometric properties of $\mathcal{C}_0$ in the following claims. 

\emph{Claim $\# 1$.} The contact set $\mathcal{C}_0$ is convex and bounded. 

\emph{Proof of claim.} Since $v$ is convex, the contact set $\mathcal{C}_0$ is convex as well. If $\mathcal{C}_0$ is not bounded, it will contain a ray $\{p_0+te : \forall\, t>0\}$ for some point $p_0\in\R^n$ and a unit vector $e\in\mathbb{S}^{n-1}$. 
Again by the convexity of $v$, one has at any point $p\in\R^n$, $\partial_ev(p)\leq 0$.
Hence, by duality, $\Om\subset\{x\in\mathbb{R}^n : x\cdot e\leq 0\}$, which contradicts with the assumption $0\in\Om$ is an interior point. 
\qed

\vskip5pt
Let $q_0\in\p V$ be an \emph{extreme} point of $\mathcal{C}_0$, namely $q_0\in\mathcal{C}_0$ cannot be expressed as a convex combination $q_0=(1-\lambda)q_1+\lambda q_2$ of points $q_1, q_2\in\mathcal{C}_0$ with $\lambda\in(0,1)$ unless $q_1=q_2$. 
Note that from \eqref{Asolv} and \cite{C92} extreme points of $\mathcal{C}_0$ can only lay on $\p\Om\cup\p V$. Then from \eqref{segin} and Claim $\# 1$, there must exist an extreme point $q_0\in\p V$ of $\mathcal{C}_0$.

\emph{Claim $\# 2$.} There is a sequence $q_i\in V$ such that $q_i\to q_0$ and $Dv(q_i)\to0$, as $i\to\infty$.

\emph{Proof of claim.} Since the extreme point $q_0\in\p V$, one has a sequence $q_i\in V$ converging to $q_0$. 
From the interior regularity, $v$ is differentiable at $q_i$ and $Dv(q_i)\in U$, for each $i=1,2,\cdots$. 
Suppose $Dv(q_i)\to z\in \overline U$, but $z\neq 0$. 
Then it implies $q_0\in\p v^*(0)\cap \p v^*(z)$, where $v^*$ is the Legendre transform of $v.$ Thus $v^*$ is affine on the segment $\overline{oz}$.
Note that the origin $0\in \p U\cap\Om$ is an interior point of $\Om$, the above contradicts to the fact that $v^*$ is strictly convex in $\Om$ as $v^*(x)=\frac12|x|^2+C$ in $\Om\setminus\overline U$ and $v^*=u$ is strictly convex in $U$. 
\qed

\vskip5pt
\emph{Claim $\# 3$.} There is a small constant $\delta_0>0$ such that $B_{\delta_0}(q_0)\cap\Lm\subset V$.

\emph{Proof of claim.} 
If this claim does not hold, then $q_0\in \overline{\Lm\cap\p V}$.
By Claim $\# 2$, let $x_i=Dv(q_i)\in U$, then $x_i\to0$. From Lemma \ref{intball}, $B_{|q_i-x_i|}(q_i) \cap\Om \subset U$ for each $i=1,2,\cdots$. Taking the limit $i\to\infty$ we have
	\[ B_{|q_0|}(q_0) \cap \Om \subset U. \]
Since $0\in\p U\cap\Om$, for $t>0$ sufficiently small the point $x_t:=tq_0\in U$. 
Let $q_t=Du(x_t)\in V$. 
By monotonicity of $\p u$, we have
\beq\label{conv1}
x_t\cdot(q_t-q_0) \geq 0.
\eeq
Again from Lemma \ref{intball}
\beq\label{ball2}
B_{|q_t-x_t|}(x_t) \cap \Lm \subset V.
\eeq	
By \eqref{conv1}, one can see that $|x_t-q_0|<|x_t-q_t|$, and thus $q_0\in B_{|q_t-x_t|}(x_t)$. Hence by \eqref{ball2}, any point $\tilde q\in\Lm$ sufficiently close to $q_0$ must also belong to $V$. This contradicts the assumption $q_0\in \overline{\Lm\cap\p V}$, and thus the claim is proved. 
\qed

\vskip5pt
Now, we go back to \eqref{segin} and denote 
$$ \hat p = \frac{p_1+p_2}{2}. $$
Note that we can actually assume that $q_0$ is \emph{exposed}, namely there is some hyperplane touches $\mathcal{C}_0$ only at $q_0$. This is due to the fact that exposed points are dense in the set of extreme points, and by \eqref{Asolv} exposed points can only lay on two separate domains either $\Om\setminus U$ or $V$. Hence, there must be an exposed point $q_0\in\p V$ of $\mathcal{C}_0$, and a unit vector $e_1$ such that
$$ (q-q_0)\cdot e_1 \geq 0,\quad\forall q\in\mathcal{C}_0$$
with the equality holds only at $q=q_0$.

Let 
$$ q_\delta :=(1-\delta)q_0 + \delta\hat p$$
for $\delta>0$ small. 
We may assume $q_\delta\in\p V\cap\mathcal{C}_0$, otherwise the proof is done by the interior strict convexity of $v$. 
Consider the centred section $S^c_\varepsilon(q_\delta)=S^c_\varepsilon[v](q_\delta)$ of $v$ at $q_\delta$ with height $\varepsilon$, defined in \eqref{sect}.
Denote by $\ell$ the straight line passing through $q_0$ and $q_\delta$, and intersects $\p S^c_\varepsilon(q_\delta)$ at two points $q_\varepsilon$, $\tilde q_\varepsilon$, namely 
$$ \ell\cap\p S^c_\varepsilon(q_\delta) = \left\{q_\varepsilon, \tilde q_\varepsilon \right\}. $$
Since $S^c_\varepsilon(q_\delta)$ is balanced at $q_\delta$, we may assume
$$(\tilde q_\varepsilon-q_\delta)\cdot e_1>0,\quad\mbox{while }\quad (q_\varepsilon-q_\delta)\cdot e_1<0. $$

Denote the half space 
$$ H^\delta := \{q\in\R^n : (q-q_0)\cdot e_1 \leq (q_\delta-q_0)\cdot e_1 \}. $$
Then we have the following observations:
\begin{itemize}
\item[$i)$] $v\leq C\varepsilon$ in $S^c_\varepsilon(q_\delta)$. 
\item[$ii)$] $S^c_\varepsilon(q_\delta) \cap H^\delta \to \mathcal{C}_0 \cap H^\delta$ in Hausdorff distance, as $\varepsilon\to0$.
\item[$iii)$] $q_\varepsilon \to q_0$ as $\varepsilon\to0$, hence
$$ \frac{|q_\varepsilon q_0|}{|q_\varepsilon q_\delta|} \to 0 \quad \mbox{ as }\varepsilon\to0. $$
\end{itemize}
Fix $\delta$ small enough so that $q_\delta$ is sufficiently close to the exposed point $q_0$. 
Since $S^c_\varepsilon(q_\delta)$ is balanced around $q_\delta$, from the above observations we can see that when $\varepsilon$ is small enough, 
$$ S^c_\varepsilon(q_\delta) \cap \Om = \emptyset. $$
Furthermore, thanks to Claim $\# 3$ we also have the \emph{doubling property} for small $\varepsilon$,
\beq\label{doubling} 
M_v[\frac{1}{2}S^c_\varepsilon(q_\delta)] \geq \beta M_v[S^c_\varepsilon(q_\delta)],
\eeq
where $\beta>0$ is a universal constant, and $M_v$ is the Monge-Amp\`ere measure in \eqref{MAmeas}.

Finally, by using a similar argument of Caffarelli's in \cite{C92b} we can derive a contradiction if \eqref{segin} occurs. 
Denote by $L$ the affine function determining $S^c_\varepsilon(q_\delta)$, namely
	$$ S_\varepsilon := S^c_\varepsilon(q_\delta) = \{x\in R^n : v(x)<L(x)\}. $$
Let $w:=v-L$. Then
\begin{equation*}
\left\{\begin{array}{cl}
\det\,D^2w \approx \chi_{S_\varepsilon\cap V} \quad & \mbox{ in } S_\varepsilon \\
w=0 \quad & \mbox{ on }\p S_\varepsilon,
\end{array}
\right.
\end{equation*}
where $A\approx B$ means $C^{-1}B\leq A\leq CB$ with a universal constant $C>0$. 
Note that $w$ is affine on the segment $\overline{q_0\tilde q_\varepsilon}$, $w(\tilde q_\varepsilon)=0$ and $w(q_\delta)=-\varepsilon$, thus $w(q_0) \leq -\varepsilon$.

By normalisation
$$ w' (q) = \frac{1}{\varepsilon} w(T^{-1}q), $$ 
where $T$ is a linear transformation such that $S'_\varepsilon=T(S_\varepsilon)  \sim B_1$, one has $w'$ satisfies
\begin{equation*}
\left\{\begin{array}{rll}
\det\,D^2w' &\!\! \approx \chi_{S'_\varepsilon\cap V'}/(|T|^2\varepsilon^n) \quad & \mbox{ in } S'_\varepsilon \\
w' &\!\! =0 \quad & \mbox{ on }\p S'_\varepsilon,
\end{array}
\right.
\end{equation*}
and $\det\,D^2w'$ satisfies the doubling property \eqref{doubling} in $S'_\varepsilon$. 
On one hand, $|\inf w'| \approx |w'(q'_\delta)| =1$, and $|w'(q'_0)|\geq1$. 
On the other hand,
\begin{equation*} 
\begin{split}
|w'(q'_0)|^n &\lesssim |\inf w'|^n\, d(q'_0, \p S'_\varepsilon)  \\
	&\leq C|q'_\varepsilon q'_0| = C\frac{|q_\varepsilon q_0|}{q_\varepsilon q_\delta} \to 0 \quad \mbox{ as }\varepsilon\to0.
\end{split}
\end{equation*}
This contradiction proves Lemma \ref{C1lem}, namely $u$ is $C^1$ up to the free boundary $\p U\cap\Om$. 
\end{proof}

\begin{corollary}\label{coro1}
The free boundary $\p U\cap\Om$ is $C^1$ in the interior of $\Om$.  
\end{corollary}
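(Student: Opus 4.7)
My plan is to deduce Corollary \ref{coro1} from Lemma \ref{C1lem} by combining the continuous extension of $Du$ with the interior ball condition from Lemma \ref{intball} and formula \eqref{normalformular}.

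By Lemma \ref{C1lem}, the gradient $Du$ extends continuously from the active interior $U$ up to $\partial U\cap\Omega$, with values in $\overline{V}\subset\overline{\Omega^*}$. Since $\Omega$ and $\Omega^*$ are separated by a hyperplane, the quantity $|Du(x)-x|$ is uniformly bounded below by a positive constant on any compact subset of $\partial U\cap\Omega$. Hence the map
\[
\nu(x) := \frac{Du(x)-x}{|Du(x)-x|}, \qquad x\in\partial U\cap\Omega,
\]
from formula \eqref{normalformular} is well-defined and continuous.

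Next, I would fix $x_0\in\partial U\cap\Omega$, choose coordinates so that $x_0=0$ and $\nu(x_0)=e_n$, and show that $\partial U$ is locally a $C^1$ graph over $\{x_n=0\}$. By Lemma \ref{intball}, for every $x\in\partial U\cap\Omega$ near $x_0$ the ball $B_{|x-Du(x)|}(Du(x))\cap\Omega$ is contained in $\overline{U}$, is tangent to $\partial U$ at $x$, and has inner normal $\nu(x)$ there. On a small compact neighborhood of $x_0$ in $\partial U\cap\Omega$, the radii $|x-Du(x)|$ are uniformly bounded from below and above, and by continuity of $\nu$ the axes all point in directions arbitrarily close to $e_n$. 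Inscribing a paraboloid in each such ball and piecing these tangent paraboloids together forces $\partial U\cap B_r(x_0)$ to be the graph $x_n=\varphi(x')$ of a Lipschitz function whose Lipschitz constant tends to $0$ as $r\to 0$, with Euclidean unit normal at $(x',\varphi(x'))$ equal to $\nu(x',\varphi(x'))$.

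Once the graph representation is in place the $C^1$ conclusion is immediate: $\nabla\varphi$ is determined by $\nu\circ(\mathrm{id},\varphi)$, which is continuous. The main obstacle is the graph step itself, because the interior-ball condition of Lemma \ref{intball} is inherently one-sided and in principle allows $\partial U$ to oscillate transversally to $e_n$. What rules this out is exactly the global continuity of $\nu$ along $\partial U\cap\Omega$ supplied by Lemma \ref{C1lem}: the interior tangent balls at the neighbouring free-boundary points play the role of the missing exterior tangent structure on the side of $\Omega\setminus U$, confining $\partial U$ to a single Lipschitz sheet which the continuous normal field then upgrades to $C^1$.
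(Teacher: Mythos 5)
Your approach is genuinely different from the paper's, and the contrast is worth recording. The paper observes that the free boundary is precisely the coincidence set $\{u=h\}\cap\Omega$ (because $u$ minimises \eqref{dual mini} and $U=\{u>h\}$), Whitney-extends $u$ to $\tilde u\in C^1(\R^n)$ using Lemma \ref{C1lem}, and then notices that $D(\tilde u - h)(x) = Du(x) - x$ never vanishes on $\partial U\cap\Omega$ because $Du(x)\in\overline\Lm$ and $x\in\Om$ are separated. The implicit function theorem then gives the $C^1$ hypersurface directly, with the topological identification of $\partial U$ with the level set handled by connectedness of the two sides of the zero set. Your route avoids the obstacle-problem description $\{u=h\}$ entirely and instead argues geometrically through Lemma \ref{intball} and continuity of $\nu$: both are valid, but the paper's is considerably shorter precisely because the level-set description does the "which side is $U$" bookkeeping for free.

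There is, however, a real gap in your graph step that you half-acknowledge but do not close. You write that the interior tangent balls at neighbouring free-boundary points "play the role of the missing exterior tangent structure," but the paraboloid-inscription picture does not by itself show that a near-vertical line meets $\partial U\cap B_r(x_0)$ at most once. The argument that closes it is the following: suppose $x,y\in\partial U\cap B_r(x_0)$ with $x'=y'$ and $y_n = x_n + t$, $t>0$. Writing $R_x=|x-Du(x)|$ and using $x-Du(x)=-R_x\nu(x)$, one computes
\[
|y-Du(x)|^2 = R_x^2 - 2tR_x\,\nu(x)\cdot e_n + t^2,
\]
which is $<R_x^2$ whenever $0<t<2R_x\,\nu(x)\cdot e_n$. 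Since $\nu(x)\cdot e_n$ is close to $1$ on $B_r(x_0)$ by Lemma \ref{C1lem} and $R_x\geq\dist(\Om,\Lm)>0$ uniformly, taking $r$ small forces $t$ into this range, so $y$ lies in the open ball $B_{R_x}(Du(x))\cap\Om\subset U$ — contradicting $y\in\partial U$. This is what actually yields the single-valued graph, and once it is in place the rest of your argument (continuity of $\nu$ upgrading Lipschitz to $C^1$) is fine. Without this explicit injectivity step, the proposal as written does not rule out $\partial U$ oscillating or doubling back transversally to $e_n$.
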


\begin{proof}
For any $\delta>0$, let $\Om_\delta=\{x\in\Om : \dist(x,\pom)>\delta\}$. From Lemma \ref{C1lem}, $u\in C^1(\overline{U\cap\Om_\delta})$. 
By the Whitney extension theorem \cite{Ho}, there exists a function $\tilde u\in C^1(\mathbb{R}^n)$ such that $\tilde u=u$ and $D\tilde u=Du$ on $\overline{U\cap\Om_\delta}$. 
Recall that $u$ is the minimiser of \eqref{dual mini}, and at $x\in\partial U\cap\Om$, $u(x)=h(x)=(|x|^2-\lambda)/2$ for some constant $\lambda$. 
Since at $x\in\partial U\cap\Om$, $D h(x)=x$, while $D \tilde u(x)\in \p\Lm$, we have 
$$ |D(h-\tilde u)|\neq0 \quad\mbox{on } \partial U\cap\Om.$$
By the implicit function theorem, $\partial U\cap\Om$ is locally a $C^1$ hypersurface, and thus we obtain that the free boundary $\p U\cap\Om_\delta$ is $C^1$. 
\end{proof}

We are now ready to prove Theorem \ref{thm1}.

\begin{proof}[Proof of Theorem \ref{thm1}]
Recall that free boundary never maps to free boundary \cite{CM}, which implies that $\p U\cap\Om$ is mapped to a part of the fixed boundary $\p V\cap\p\Lm$. Hence, showing $v$ is $p$-uniformly convex \cite{CM} up to the image of the free boundary $\p U\cap\Om$ implies that $u$ is $C^{1,\alpha}$ up to $\p U\cap\Om$, and thus the free boundary $\p U\cap\Om$ itself is $C^{1,\alpha}$ as well. 
 
Let $0\in\p U\cap\Om$. From Lemma \ref{C1lem}, there is a unique $q_0\in\overline V$ such that $q_0=\lim_{k\to\infty}Du(x_k)$ for any $x_k\in U$ and $x_k\to0$ as $k\to\infty$. From the proof of Claim $\# 3$ in Lemma \ref{C1lem}, we also have $B_{\delta_0}(q_0)\cap\Lm\subset V$ for some small $\delta_0>0$. 
In order to show $v$ is $p$-uniformly convex in $B_{r_0}(q_0)\cap V\cap \Lm$ for a small $r_0>0$, we need the following localisation property:
\begin{lemma}\label{1loc}
Let $S^c_\varepsilon(q)$ be the centred section of $ v$ given in \eqref{sect}. 
Then,
\begin{itemize}
\item[(i)] $S^c_\varepsilon(q_0)\cap\Lm\subset V$, $S^c_\varepsilon(q_0)\cap\bom=\emptyset$, for $\varepsilon>0$ small; and
\item[(ii)] $\exists$ $\eta_0>0$ small, such that $\forall q\in B_{\eta_0}(q_0)$, $S^c_\varepsilon(q)\cap\Lm\subset V$, $S^c_\varepsilon(q)\cap\bom=\emptyset$, for $\varepsilon>0$ small enough. 
\end{itemize}
\end{lemma}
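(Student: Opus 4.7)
The plan is to reduce Lemma \ref{1loc} to a local strict convexity of $v$ at $q_0$, extracted from Lemma \ref{C1lem} via Legendre duality, combined with the geometry from Claim $\# 3$ and the hyperplane separation of $\Om$ and $\Lm$.

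I would first normalise $v$ so that $v(q_0) = 0$; since $Du(0) = q_0$ by Lemma \ref{C1lem}, duality gives $0 \in \partial v(q_0)$, so the affine plane $L_0 \equiv 0$ supports $v$ at $q_0$ and $v \geq 0$ near $q_0$. The central step is to show $\{v = 0\} = \{q_0\}$ locally. A contact point $q_1 \neq q_0$ close to $q_0$ would, by convexity, force $v \equiv 0$ on $[q_0, q_1]$. The case $q_1 \in \Lm$ is immediately excluded: by Claim $\# 3$ the relative interior of $[q_0, q_1]$ would lie in $B_{\delta_0}(q_0) \cap \Lm \subset V$, contradicting the interior strict convexity of $v$ on $V$ \cite{C92}. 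The case $q_1 \not\in \Lm$ is handled via the extension \eqref{newv}: the relevant supporting slopes of $v$ at points of $V$ near $q_0$ lie in $U \subset \Om$, which is well separated from $q_0$ by the hyperplane assumption, yielding $v > 0$ in any direction from $q_0$ pointing out of $\Lm$. Once $\{v = 0\} = \{q_0\}$ is established, the sub-level sets $\{v < \eta\}$ containing $q_0$ are bounded (convexity together with $\{v = 0\} = \{q_0\}$ forbids $v$ from decaying to $0$ along any ray) and shrink to $\{q_0\}$ as $\eta \to 0$. Since the balancing slope $x_\varepsilon$ of the centred section is uniformly bounded and approaches an element of $\partial v(q_0)$, squeezing $S^c_\varepsilon(q_0)$ between appropriate sub-level sets of $v$ gives $\diam(S^c_\varepsilon(q_0)) \to 0$.

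Part (i) then follows immediately: for $\varepsilon$ small, $S^c_\varepsilon(q_0) \subset B_{\delta_0}(q_0)$, so by Claim $\# 3$, $S^c_\varepsilon(q_0) \cap \Lm \subset V$; and because $q_0 \in \overline{\Lm}$ sits at positive distance from $\bom$ thanks to the hyperplane separation, $S^c_\varepsilon(q_0) \cap \bom = \emptyset$. For part (ii), I would argue by compactness/perturbation: if the uniform localisation failed, sequences $q_k \to q_0$ and $\varepsilon_k \to 0$ would exist with $S^c_{\varepsilon_k}(q_k)$ escaping $V$ or meeting $\bom$; passing to Hausdorff limits via uniform Lipschitz control of $v$ and boundedness of the balancing slopes would produce a limiting set that violates part (i). The main obstacle is the second case of the strict convexity step: establishing that $v$ grows strictly away from $q_0$ in the direction leaving $\Lm$ demands a careful analysis of the extension formula \eqref{newv}, exploiting both the hyperplane separation of $\Om, \Lm$ and the fact that the supporting slopes of $v|_V$ at points near $q_0$ lie in the active domain $U$.
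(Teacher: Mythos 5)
Your overall strategy — reduce the lemma to showing that the contact set of the supporting plane at $q_0$ is the singleton $\{q_0\}$, and then argue that the centred sections must shrink — is aligned with the paper's. Your Case 1 ($q_1\in\Lm$) is correct and essentially matches what happens implicitly in the paper: combining Claim \#3 ($B_{\delta_0}(q_0)\cap\Lm\subset V$) with Caffarelli's interior strict convexity rules out a nearby contact point inside $\Lm$.

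The genuine gap is exactly where you flag it: Case 2 ($q_1\notin\Lm$, which must also absorb $q_1\in\p\Lm$ since $\Lm$ is only convex, not strictly convex). The heuristic "the supporting slopes of $v$ lie in $U\subset\Om$, which is separated from $q_0$, hence $v>0$ leaving $\Lm$" does not go through, and in fact cannot be established by a local argument: the extended $v$ in \eqref{newv} has its Monge--Amp\`ere measure supported on $(\Om\setminus U)\cup V$, so near $q_0$ but outside $\Lm$ this measure vanishes (because $\Om$ is separated from $\Lm$), and there is no local obstruction to $v$ being affine on a segment leaving $\Lm$. What closes the argument in the paper is a \emph{global} observation about the full contact set $\mathcal{C}_0=\{v=0\}$: it is convex and bounded, and by \eqref{Asolv} together with Caffarelli's strict convexity \cite{C92} its extreme points can only lie on $\p\Om\cup\p V$. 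If the centred sections at $q_0$ did not shrink, the limiting segment $I_0\subset\mathcal{C}_0$ balanced around $q_0$ would show $q_0$ is not an extreme point; since $q_0\notin\bom$ and $\mathcal{C}_0$ is the convex hull of its extreme points, there must then exist an extreme point $q_1\in\p V$ with $q_1\neq q_0$. But then both $q_0$ and $q_1$ belong to $\p u(0)$, contradicting the $C^1$ regularity of $u$ at the origin from Lemma \ref{C1lem}. This is the step your proposal is missing: the "outside $\Lm$" part of the segment must terminate at an extreme point, and the extreme-point structure forces that terminus back onto $\p V$, where $C^1$ regularity of $u$ applies. For part (ii) your compactness argument is a plausible alternative to the paper's more explicit balancing estimate $v(q_1)+v(q_2)\gtrsim\varepsilon_0$, but it rests on part (i), so the gap above must be filled first.
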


\begin{proof}
First, we prove (i). Suppose to the contrary that there exists a sequence $\{\varepsilon_k\}$ converging to $0$ as $k\to\infty$, such that for each $k$ there is a segment $I_k\subset S^c_{\varepsilon_k}(q_0)$ passing through $q_0$ and balanced around $q_0$, and $|I_k|\geq b_0$ for a positive constant $b_0$.
Taking $k\to\infty$, we have $I_k\to I_0\subset\{ v=0\}$ for some $I_0$ balanced around $q_0$ and $|I_0|\geq b_0$. 
Hence, $q_0$ cannot be an extreme point of $\{ v=0\}$.
From the proof of Lemma \ref{C1lem}, the set $\{v=0\}$ must be bounded, and the extremal points of $\{v=0\}$ are contained in $\bom\cup\overline V$.
Therefore, $\{v=0\}\cap\p V$ contains at least one extreme point of $\{v=0\}$, and denoted it by $q_1\neq q_0$. 
However, this contradicts with the $C^1$ regularity of $u$ in Lemma \ref{C1lem}.

Next, we prove (ii). Note that the proof of (i) implies that for any small $r_0>0$, there exists $\varepsilon_0>0$ such that $S^c_{\varepsilon_0}(q_0)\subset B_{r_0}(q_0)$.
Let $\overline{q_1 q_2}$ be any segment passing through $q$ and ending on $\p S^c_{\varepsilon_0}(q_0)$, namely $q_1,q_2\in \p S^c_{\varepsilon_0}(q_0)$.
For $\eta_0$ sufficiently small, we have $v(q_1)+v(q_2) \gtrsim \varepsilon_0$.
Without loss of generality we may assume $v(q_1)\gtrsim \varepsilon_0$, and thus $q_1\notin S^c_\varepsilon(q)$ for $\varepsilon \ll \varepsilon_0$ sufficiently small. 
Denote by $\ell$ the line passing through $q_1, q_2$. Let 
$$ \left\{\tilde q_1, \tilde q_2 \right\} := \ell \cap \p S^c_\varepsilon(q) $$
be the intersections of $\ell$ and $ \p S^c_\varepsilon(q) $, while $q_i, \tilde q_i$ are on the same side of $q$, namely $(q_i-q)\cdot(\tilde q_i-q)>0$, for $i=1,2$. Then, since $q_1\notin S^c_\varepsilon(q)$,
$$ |\tilde q_1 - q| < |q_1-q| \lesssim \diam\left(S^c_{\varepsilon_0}(q_0)\right) \lesssim r_0.$$
Since $S^c_\varepsilon(q)$ (and hence $\ell \cap \p S^c_\varepsilon(q)$) is balanced around $q$, we also have
$$  |\tilde q_2 - q|  \lesssim r_0. $$
By the arbitrariness of small $r_0$, we conclude the proof.  
\end{proof}
Once having the above localisation property, following \cite[\S7]{CM} we can prove the doubling property \eqref{doubling} and obtain the $p$-uniform convexity of $v$ via the approach of geometric decay of centred sections \cite{C92b,C96}. By duality, we have $u$ is $C^{1,\alpha}$ near the origin $0\in\p U\cap\Om$.

A final step is to show the free boundary $\p U\cap\Om$ is $C^{1,\alpha}$ in the interior of $\Om$.
From Lemma \ref{C1lem} and Corollary \ref{coro1}, we have \eqref{normalformular}, that is the inner normal of the free boundary at $x\in \p U\cap\Om$ is given by $Du(x)-x$, which is thus H\"older continuous in the interior of $\Om$. 
Therefore, Theorem \ref{thm1} is proved. 
\end{proof}

\begin{remark}\label{nonconvex}
Note that for the regularity of free boundary $\partial U\cap \Omega$, we can 
actually remove the convexity condition on $\Omega$ in Theorem \ref{thm1}. The reason is as follows:
 Recall that $v^*=|x|^2/2$ in $\Omega\setminus U$,
and $v^*=u$ in $U$ is strictly convex in the interior of $U,$ hence $v^*$ is strictly convex in $\Omega.$ Fix
$x_0\in \partial U\cap \Omega,$ denote $y_0=Du(x_0)\in \partial \Omega^*.$ Since $v^*$ is strictly convex near $x_0,$ the function $v$ is $C^1$ near $y_0.$  Let $\Omega_1=B_{r}(x_0),\ \Omega_1^*=Du(B_r(x_0))$ for some small $r>0,$ we have $Du$ solves an optimal partial transport problem from $\Omega_1$ to $\Omega_1^*.$ Now, $\Omega_1$ is convex and $\Omega_1^*$ is locally convex near $y_0,$ hence we can apply the previous proof to show that $u$ is $C^{1,\alpha}$ near $x_0.$  
% Moreover, in dimension 2, one can remove the convexity condition on $\Omega,$ namely the above tholds for any domain $\Omega$ (see Remark \ref{nonconvex} for details).
For the same reason, for the higher order regularity of $\partial U\cap\Omega$, we can also remove the convexity condition on $\Omega$ in \cite[Theorem 1.1]{CLWfree}.
\end{remark}

%\vskip 10pt
\section{Higher order regularities} \label{S4}

Let $\Omega, {\Omega^*}$ be bounded, convex domains with $C^{1,1}$ boundaries. 
When $d:=\dist(\Omega, {\Omega^*})$ is sufficiently large, heuristically one can see that $\frac{y-x}{|y-x|}$ is uniformly close to a unit vector $e\in\mathbb{S}^{n-1}$ for all $x\in \Omega$, $y\in {\Omega^*}$.
Without loss of generality we may assume $e=e_n=(0,\cdots,0,1)$.
Let $f, g$ be the densities supported on $\Om, \Lm$, respectively, and $m$ is the partial mass satisfying \eqref{mass}.
There are two constants $a, b\in\mathbb{R}$ such that 
 $$m=\int_{\{x^n>a\}\cap \Omega}f = \int_{\{y^n<b\}\cap {\Omega^*}}g.$$
Let $x\in\p U\cap\Om$, and $\nu(x)$ be the unit inner normal of the free boundary $\p U\cap\Om$ at $x$.
We have $|\nu(x)-e_n|$ is as small as we want, provided $d$ is large enough, thus 
 \begin{equation}\label{gq1}
 \{x^n>a+\delta\}\cap \Omega\subset U \subset \{x^n>a-\delta\}\cap \Omega,
 \end{equation}
 and similarly
\begin{equation} \label{gq2}
 \{y^n<b-\delta\}\cap {\Omega^*}\subset V \subset \{y^n<b+\delta\}\cap {\Omega^*},
 \end{equation}
  where $\delta$ can be as small as we want, provided $d$ is large enough.
  By translating the coordinates, we may assume $b=0$ and $0\in \p V\cap {\Omega^*}.$
  
Let $u$ be the potential function satisfying \eqref{push}--\eqref{homo int}. 
It is straightforward to check that $\tilde{u}(x):=u(x+ae_n)$ solves the optimal transport problem from
  $(\tilde{U}, \tilde{f}\chi_{\tilde{U}})$ to $(V, g\chi_V),$ where $\tilde{U}:=\{x: x+ae_n\in U\},$ and $\tilde{f}(x):=f(x+ae_n).$
By \eqref{gq1} and the above sliding, we have
   \begin{equation}\label{closeto1}
 \{x^n>\delta\}\cap \Omega\subset U \subset \{x^n>-\delta\}\cap \Omega,
 \end{equation}
 where $\delta$ can be as small as we want, provided $d$ is large enough.
Note that for simplicity, in \eqref{closeto1} and below we still use $u, f, U, \Om$ to denote the items after the sliding.  
 
Now, we consider the limit case.  
Let $U_\infty:=\{x^n>0\}\cap \Omega$, $V_\infty:=\{y^n<0\}\cap {\Omega^*}$. 
Let $u_\infty$ be the convex function solving
\beq\label{lim1}
(Du_\infty)_{\#} {f}\chi_{U_\infty}=g\chi_{V_\infty},
\eeq
with $u_\infty(x_0)={u}(x_0)$ for some $x_0\in U_\infty\cap U$.
By a standard compactness argument, we have $u_\infty$ and $u$ will be as close as we want, provided $d$ is large enough. 
In the spirit of this observation, we shall obtain the regularity of $u$ as a small perturbation of $u_\infty$, but for convenience and consistency of notations with \S3, we shall work with dual potentials $v, v_\infty$ instead. In \S\ref{s41}, we obtain the smoothness of $v_\infty$ in the limit case. In \S\ref{s42} and \S\ref{s43}, by compactness and perturbation argument, we show that $v$ is sufficiently close to $v_\infty$, and further to a quadratic function, when $d$ is large enough. Theorem \ref{thm2} is then proved in \S\ref{s44}.

\subsection{Smoothness in the limit case}\label{s41}

Let $v_\infty$ be the dual potential of $u_\infty$, namely
\beq\label{lim2}
(Dv_\infty)_{\#} g\chi_{V_\infty} = {f}\chi_{U_\infty}.
\eeq
We shall obtain the regularity of $v_\infty$ up to the boundary $\p V_\infty\cap\Lm$.
Recall that when the domains are uniformly convex, the boundary regularity of potential function was obtained by Caffarelli \cite{C92b,C96}, Delano\"e \cite{D91}, and Urbas \cite{U1}. 
The uniform convexity assumption on domains has been relaxed in our recent work \cite{CLW1}, see also \cite{CLW2,SY} for dimension two, and \cite{CLW3} for perturbations of convex domains. 
However, none of the above applies directly to the limit case \eqref{lim1}--\eqref{lim2}, because the domains $U_\infty, V_\infty$ are neither uniformly convex nor even $C^1$ smooth. 
In the following we will overcome this obstacle by deriving key estimates that ensure the boundary regularity of $v_\infty$ up to $\p V_\infty\cap\Lm$.

\begin{lemma}[Obliqueness]\label{obliqueness}
$\forall$ $y_0\in \p V_\infty\cap \Lm$, let $x_0=Dv_\infty(y_0)$. Then, $\nu(x_0)\cdot (-e_n)>0,$ and $x_0^n>0$, where $\nu(x_0)$ is the unit inner normal of $\p U_\infty$ at $x_0$. 
\end{lemma}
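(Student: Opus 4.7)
My plan proves the two claims in three steps, combining the ``free boundary does not map to free boundary'' principle, the interior ball condition of Lemma~\ref{intball}, and approximation by the pre-limit partial transport $v$.

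First, I would locate $x_0$. Suppose for contradiction that $x_0\in\{x^n=0\}\cap\Om$, the flat part of $\partial U_\infty$. By duality, $y_0\in\partial u_\infty(x_0)$, so $y_0$ lies in the image of the flat free boundary of $U_\infty$ under $Du_\infty$. The Caffarelli--McCann principle that free boundary never maps to free boundary (established in \cite{CM} for the partial transport and inherited by $u_\infty$ via the compactness $u\to u_\infty$) then forces $y_0\in\partial\Om^*\cap\bar V_\infty$. This contradicts $y_0\in\Om^*$ being an interior point. Hence $x_0\in\partial\Om\cap\{x^n\geq 0\}$.

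Next I would establish the strict obliqueness $\nu(x_0)\cdot(-e_n)>0$ by approximation. For each large $d$, the pre-limit partial transport $v$ satisfies a uniform obliqueness $\nu^{(d)}(x_0^{(d)})\cdot(-e_n)\geq\delta$, where $\delta>0$ depends only on the fixed data ($\Om$, $\Om^*$, and the bounds on $f,g$). Such a uniform lower bound can be extracted from the interior ball estimate of Lemma~\ref{intball} combined with the $C^{1,\alpha}$ analysis of Section~\ref{S3}. Passing $d\to\infty$ via $v\to v_\infty$ and continuity of the normal yields $\nu(x_0)\cdot(-e_n)\geq\delta>0$. To then rule out the ridge ($x_0^n=0$), apply Lemma~\ref{intball} at a sequence $y_k\in V$ with $y_k\to y_0$ and $x_k=Dv(y_k)\to x_0$; passing to the limit gives $\Om\cap B_{|x_0-y_0|}(y_0)\subset\bar U_\infty$. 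A polar cone argument at a hypothetical ridge point, using Farkas' lemma on the inward-and-downward cone $\{w:w\cdot\nu(x_0)>0,\;w\cdot e_n<0\}$ at $x_0$, then writes $x_0-y_0=\alpha\nu(x_0)+\beta(-e_n)$ with $\alpha,\beta\geq 0$; taking the $n$-th component and using $(x_0-y_0)^n=0$ forces $\alpha\nu^n(x_0)=\beta\geq 0$, whence $\nu(x_0)\cdot(-e_n)\leq 0$, contradicting the strict obliqueness just proven. Therefore $x_0^n>0$.

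The main technical obstacle is the \emph{uniform} positive lower bound on the obliqueness in the pre-limit, which is essential for strictness to survive as $d\to\infty$. This requires a careful inspection of the estimates in Section~\ref{S3} to verify that the obliqueness depends only on intrinsic data (the convex geometry of $\Om, \Om^*$ and the $L^\infty$-bounds on $f,g$) rather than on the separation $d$; once this uniformity is in hand, the compactness argument closes the proof.
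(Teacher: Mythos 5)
Your proposal identifies the right logical structure (locate $x_0$, establish strict obliqueness, then rule out the ridge case $x_0^n=0$), but it routes the crucial step through a \emph{uniform pre-limit obliqueness estimate} that the paper never establishes, and which is not a consequence of the results in \S\ref{S3}. Section~\ref{S3} proves that $u$ is $C^{1,\alpha}$ up to $\partial U\cap\Omega$ and that the free boundary itself is $C^{1,\alpha}$; it says nothing about the angle between the inner normal of $\partial\Omega$ at $x_0^{(d)}\in\partial U\cap\partial\Omega$ and the transport direction, let alone a lower bound for that angle uniform in $d$. The paper's proof of $\nu(x_0)\cdot(-e_n)>0$ is instead a direct argument on the limit potentials: after normalising so that $x_0=y_0=0$ and $\nu(x_0)=e_1$, it slides $y_t=-te_1$ along the \emph{flat} part of $\partial V_\infty$, uses $v_\infty(y_t)\equiv 0$ together with monotonicity and the inclusion $U_\infty\subset\{x\cdot e_1\geq 0\}$ to produce a segment on $\partial U_\infty$ along which $u_\infty$ is affine, contradicting strict convexity of $u_\infty$ up to the fixed boundary. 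This argument exploits the exactly flat geometry of $\partial V_\infty$ near $y_0$, which has no analogue at finite $d$; that is precisely why the paper passes to the limit problem \emph{before} proving obliqueness rather than after. Even granting a pre-limit estimate, ``continuity of the normal'' under $d\to\infty$ would require $C^1$ convergence of the potentials up to the boundary, which the $L^\infty$ compactness in Lemma~\ref{clolem} does not deliver a priori.

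Two further points. First, your Step~1 invokes ``free boundary never maps to free boundary, inherited by $u_\infty$ via compactness''; this inheritance is itself nontrivial and is exactly what the paper's first step proves directly (showing $\nu(x_0)\cdot(-e_n)\geq 0$ by moving $x_t=x_0+te_n$ into $U_\infty$ and applying monotonicity — which immediately excludes $\nu(x_0)=e_n$, i.e.\ excludes $x_0$ being on the flat free boundary). Second, the interior-ball Lemma~\ref{intball} degenerates in the limit: after the sliding and as $d\to\infty$ the radius $|x-y|$ grows like $d$ and the ball tends to the half-space $\{x^n>0\}$, so the condition becomes $\Omega\cap\{x^n>0\}\subset U_\infty$, which is already built into the definition of $U_\infty$ and carries no new information at a ridge point. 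Your Farkas computation (putting $x_0-y_0$ in the cone spanned by $\nu(x_0)$ and $-e_n$, then reading off the $n$-th component to contradict strict obliqueness) is a reasonable idea, but the containment you need cannot be harvested from Lemma~\ref{intball} in the limit. The paper instead handles $x_0^n=0$ by showing that at such a ridge one can write $\nu(x_0)=-\alpha e_1-\beta e_n$ with $\alpha,\beta>0$, deducing $(x-x_0)\cdot e_1\leq 0$ on $\overline{U_\infty}$, and then repeating the duality–strict-convexity argument (or, alternatively, the argument of Claim~3 in Lemma~\ref{C1lem}). In short: the target is right, but the engine you propose — uniform pre-limit obliqueness plus passage to the limit — is missing, and this is the genuine gap.
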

\begin{proof}
First, we show that $\nu(x_0)\cdot (-e_n)\geq 0$. 
Suppose to the contrary that $\nu(x_0)\cdot (-e_n)<0$. 
Since $\pom\in C^{1,1}$, we have $x_t:=x_0+te_n\in \Omega\cap \{x^n>0\}$ for small $t>0$. By monotonicity, 
$$(x_t-x_0)\cdot (Du_\infty(x_t)-y_0)\geq 0,$$ 
which implies $Du_\infty(x_t)\in \{y^n=0\},$ namely
 $Du_\infty(x_t)$ is on the boundary of $V_\infty.$ 
 This contradicts to the fact that $x_t$ is an interior point of $U_\infty$, and $Du_\infty : U_\infty \to V_\infty$ is a homeomorphism, see \eqref{homo int}. 

Next, we show that $\nu(x_0)\cdot (-e_n)>0$. 
By a translation (preserving the $y_n$-axis) and subtracting an affine function, we may assume that $y_0=0$ and $v_\infty \geq 0=v_\infty (0)$. 
Simultaneously, by duality this makes $x_0=0$ and $u_\infty\geq u_\infty(x_0)$. By subtracting a constant, we also assume that $u_\infty(x_0)=0$. 
Suppose to the contrary that $\nu(x_0)\cdot(- e_n)=0$. Without loss of generality, we can assume $\nu(x_0)=e_1$, thus
\beq\label{newdd} 
	U_\infty \subset \{x\in\R^n : (x-x_0)\cdot e_1\geq0\}. 
\eeq
By duality, $D_{y_1}v_\infty \geq 0$ in $V_\infty$.
Let $y_t=-te_1\in  \partial V_\infty\cap\Lm$ for $t>0$ small.  
Then $v_\infty(y_t)\equiv0$, $\forall\, t\in[0,t_0]$ for some small constant $t_0$. 
Let $x_t=Dv_\infty(y_t)$. As $v_\infty\geq0$, by duality
$$ x_t \in \p U_\infty\cap\{x\in\R^n : (x-x_0)\cdot e_n<0\}. $$
Otherwise, for a small $\varepsilon>0$, $v_\infty(y_t-\varepsilon e_n)\leq 0$, which would contradict with the strict convexity of $v_\infty$ in the interior of $V_\infty$. 
Again, by monotonicity we have 
$$(Dv_\infty(y_t)-x_0)\cdot (y_t-y_0)\geq 0,$$ 
which implies $(x_t-x_0)\cdot e_1\leq0$. 
Then by \eqref{newdd}, 
$$ (x_t-x_0)\cdot e_1=0,\quad \forall\,t\in[0,t_0], $$
which implies that $\p U_\infty$ contains a flat segment $\overline{x_0 x_{t_0}}$ lies on the $x_1$-axis.
On the other hand, by duality $Du_\infty(x_t)=y_t$ lies on the $y_n$-axis, one has $Du_\infty(x_t)\cdot e_1=0$ for all $t\in[0,t_0]$.
Hence, $u_\infty(x_t)\equiv0$ for all $t\in[0,t_0]$. This contradicts the strict convexity of $u_\infty$ up to the fixed boundary $\p U_\infty\cap\pom$.

Lastly, we only need to rule out the case ``$\nu(x_0)\cdot (-e_n)>0$, $x_0^n=0$".  
If this occurs, by a rotation we may assume $\nu(x_0)\in span\{e_1,e_n\}$ and $\nu(x_0)\cdot e_1<0$. 
(Indeed, if $\nu(x_0)\cdot e_1=0$, the hyperplane $\{x_n=0\}$ would be tangential to $\pom$ and then $|U_\infty|=0$.)
Hence, there are two constants $\alpha, \beta > 0$ such that
$$ \nu(x_0) = -\alpha e_1 - \beta e_n. $$
Then, $\forall$ $x\in \overline U_\infty$ ($x\ne x_0$), we have
\begin{equation*} 
\begin{split}
(x-x_0)\cdot e_1 &= -\frac{1}{\alpha}(x-x_0)\cdot \nu(x_0) - \frac{\beta}{\alpha}(x-x_0)\cdot e_n  \\
	& \leq 0,
\end{split}
\end{equation*}
where the last inequality holds because of $(x-x_0)\cdot \nu(x_0)\geq0$ as $\nu(x_0)$ is the unit inner normal, and $(x-x_0)\cdot e_n \geq 0$ as $U_\infty\subset \{x^n>0\}$.
Let $y_t=y_0+te_1 \in \p V_\infty\cap\Lm$ for a small constant $t>0$. We can then make the same contradiction as in the last paragraph. 
Alternatively, since $y_0$ is a relatively interior point of free boundary $\p V_\infty\cap\Lm$, we can also exclude this case as in the proof of Claim 3 in Lemma \ref{C1lem}.
\end{proof}

In order to derive the boundary regularity of $v_\infty$ on $\p V_\infty\cap\Lm$, another important ingredient is the \emph{uniform density property}, 
namely there exists a positive constant $c_0$ such that for $y\in\p V_\infty\cap\Lm$ and $h>0$ small, the centred section $S^c_h(y)=S^c_h[v_\infty](y)$ satisfies 
\begin{equation}\label{unid}
\frac{\Vol(S^c_h(y)\cap V_\infty)}{\Vol(S^c_h(y))} \geq c_0.
\end{equation}
Note that since $\partial V_\infty\cap\Lm$ is flat, the uniform density property \eqref{unid} is trivial in this case. 
Once having the uniform density and obliqueness estimates, following the approach as in \cite{CLW1}, we can obtain the regularity of $v_\infty$ up to the flat boundary.

\begin{lemma}\label{limsm}
Let $U_\infty, V_\infty$ be the active regions, and $f, g$ be the densities in the above limit case.
Let $v_\infty$ be the convex function solving $(Dv_\infty)_{\#} g\chi_{V_\infty} = {f}\chi_{U_\infty}$.  Then,

\noindent a) if $f, g\in C^0$, then $v_\infty\in C^{1, \beta}(V_\infty)$ up to $\p V_\infty\cap\Lm$, for all $\beta\in (0,1)$;

\noindent b) if $f, g\in C^\alpha$, then $v_\infty\in C^{2,\alpha}(V_\infty)$ up to $\p V_\infty\cap\Lm$.
\end{lemma}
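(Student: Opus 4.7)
The plan is to reduce the lemma to the boundary regularity theory for Monge–Amp\`ere equations developed in \cite{CLW1}, by verifying that its two key geometric hypotheses — the uniform density property \eqref{unid} and the obliqueness of the Legendre-type map — are met at every $y_0\in\p V_\infty\cap\Lm$. The uniform density property is automatic because $\p V_\infty\cap\Lm$ is a flat piece of the hyperplane $\{y^n=0\}$: every centred section $S^c_h(y_0)$ is an ellipsoid balanced at $y_0$, so exactly one half sits in $\{y^n<0\}$, and intersecting with $\Lm$ loses only a negligible fraction for $h$ small since $\p\Lm\in C^{1,1}$ and $y_0$ is a relatively interior point of $\p V_\infty\cap\Lm$. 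The obliqueness property is exactly Lemma \ref{obliqueness}, which guarantees that $x_0:=Dv_\infty(y_0)$ satisfies $x_0^n>0$ and $\nu(x_0)\cdot(-e_n)>0$; in particular $x_0$ lies on the $C^{1,1}$ convex piece $\p\Om\cap\{x^n>0\}$ of $\p U_\infty$, bounded away from the edge $\p\Om\cap\{x^n=0\}$.

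Next I would establish strict convexity of $v_\infty$ up to $\p V_\infty\cap\Lm$. If the contact set $\mathcal{C}_0$ of a support plane of $v_\infty$ at $y_0$ contained a non-trivial segment, then, by the interior strict convexity of $v_\infty$ in $V_\infty$ \cite{C92} and the extremal-point analysis used in Claim \#1--\#3 of Lemma \ref{C1lem}, $\mathcal{C}_0$ would contain a second extreme point on $\p V_\infty\cap\Lm$, contradicting the single-valuedness of $Dv_\infty$ enforced by obliqueness. With strict convexity up to the flat boundary and the uniform density property in hand, the centred sections $S^c_h(y_0)$ satisfy the doubling property \eqref{doubling} quantitatively as $h\to 0$. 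For part (a), I would then apply the renormalisation/engulfing scheme of \cite{C96,CLW1}: after the affine rescaling that maps $S^c_h(y_0)$ to a set of unit size, $v_\infty$ solves a Monge–Amp\`ere equation with bounded right-hand side on a convex domain comparable to $B_1$, and Caffarelli's $W^{2,p}$ and $C^{1,\alpha}$ theory yield uniformly controlled affine distortion between successive sections. When $f,g\in C^0$, the modulus of continuity of $f/g$ makes these distortions converge to the identity, which upgrades the exponent to any $\beta\in(0,1)$.

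For part (b), with $f,g\in C^\alpha$, I would use the obliqueness and the $C^{1,\beta}$ regularity from part (a) to perform a partial Legendre transform straightening $\p V_\infty\cap\Lm$, converting the problem into a Monge–Amp\`ere equation with a uniformly oblique, $C^{1,\alpha}$ boundary condition on a $C^{1,1}$ convex domain. The boundary $C^{2,\alpha}$ estimate of \cite{CLW1} (itself built on Krylov-type boundary Schauder theory and Caffarelli's $C^{2,\alpha}$ arguments) then applies to give $v_\infty\in C^{2,\alpha}$ up to $\p V_\infty\cap\Lm$.

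The main obstacle is that \cite{CLW1} is formulated for the complete optimal transport problem between two globally $C^{1,1}$ convex domains, whereas here $U_\infty$ has a non-smooth corner along $\p\Om\cap\{x^n=0\}$ and $V_\infty$ has a non-smooth edge along $\p\Lm\cap\{y^n=0\}$. Thanks to obliqueness, $x_0$ stays bounded away from the $U_\infty$-corner, and $y_0$ is a relatively interior point of the flat face; so I would circumvent the obstacle by a standard localisation: fix small convex neighbourhoods of $x_0$ and $y_0$ in which $\p U_\infty$ is globally $C^{1,1}$ convex and $\p V_\infty$ consists of a flat piece only, and run the arguments of \cite{CLW1} on these localised Monge–Amp\`ere data. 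This step requires some care to ensure that the localised problem still captures the correct Monge–Amp\`ere measure, but it is a routine adaptation of the localisation already used in the proof of Caffarelli's $C^{2,\alpha}$ estimates.
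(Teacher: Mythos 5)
Your proposal follows essentially the same route as the paper: the paper's ``proof'' of Lemma~\ref{limsm} consists precisely of establishing obliqueness (Lemma~\ref{obliqueness}), observing that the uniform density property \eqref{unid} is trivial since $\partial V_\infty\cap\Lm$ is flat, and then invoking the boundary regularity machinery of \cite{CLW1}. Your additional remarks on strict convexity up to the flat boundary, the renormalisation/engulfing scheme for part (a), the Schauder step for part (b), and the localisation needed because $U_\infty, V_\infty$ have corners away from $x_0, y_0$ are all accurate elaborations of what the paper leaves implicit in the phrase ``following the approach as in \cite{CLW1}.''
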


%Similarly, we also have the regularity of $u_\infty$ in $U_\infty$ up to the flat boundary $\p U_\infty\cap\Om$.
As a corollary of Lemma \ref{limsm}, we give some geometric properties of sub-level sets of $v_\infty$, which will be useful in the subsequent perturbation argument. 

Let $y_0\in \p V_\infty\cap{\Omega^*}$, and $x_0:= Dv_\infty(y_0)\in\pom$. 
By a similar approach as for Lemma \ref{limsm}, we can also obtain the regularity of $u_\infty$ at $x_0$, which then implies its Legendre transform $v_\infty$ is strictly convex at $y_0$. 
By a translation of coordinates, we may assume $x_0=y_0=0$.
From the strict convexity of $v_\infty$, by Remark \ref{reeq} we have an equivalency relation between its sub-level sets $S_{v_\infty, h}=S_h[v_\infty](0)$ and centred sections $S^c_{v_\infty,h}=S^c_h[v_\infty](0)$, namely for all small $h>0$
\beq\label{equi}
S^c_{v_\infty, b^{-1}h}\cap V_\infty \subset S_{v_\infty, h} \subset S^c_{v_\infty, bh}\cap V_\infty,
\eeq
where $b\geq1$ is a constant independent of $h$. 
The following lemma contains a more delicate geometric property of $S_{v_\infty, h}$ for $h>0$ small. 

\begin{lemma} \label{loc11}
Assume the densities $f, g$ are positive and continuous. 
For any $h>0$ small, there is a symmetric matrix $A$ satisfying $\det\, A=1$, and $\|A\|,\|A^{-1}\|\leq Ch^{-\epsilon}$
 such that 
\begin{equation}\label{gs1}
B_{C^{-1}h^{1/2}}\cap AV_\infty\subset AS_{v_\infty,h}\subset B_{Ch^{1/2}}\cap AV_\infty,
\end{equation}
and 
\begin{equation}\label{gs2}
B_{C^{-1}h^{1/2}}\cap A^{-1}U_\infty\subset A^{-1}Dv_\infty(S_{v_\infty, h})\subset B_{Ch^{1/2}}\cap A^{-1}U_\infty,
\end{equation}
where $C>0$ is a universal constant, and $\epsilon>0$ can be as small as we want.
\end{lemma}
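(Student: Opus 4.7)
The plan is to apply John's theorem to the centred section $S^c_{v_\infty,h}(0)$, pin down the scale via a Monge--Amp\`ere volume estimate, transfer to the sub-level set using the equivalency \eqref{equi}, and treat the image set by Legendre duality. Since $S^c_{v_\infty,h}(0)$ is a bounded convex body balanced around $0$, John's theorem produces an origin-centred ellipsoid $E_h = \{y : y^\top M_h y \leq 1\}$ with $M_h$ symmetric positive-definite such that $E_h \subset S^c_{v_\infty,h}(0) \subset nE_h$. Define $A := c_h M_h^{1/2}$ with $c_h>0$ chosen so that $\det A = 1$; then $A$ is symmetric, $A E_h = B_{c_h}$, and hence $B_{c_h} \subset AS^c_{v_\infty,h}(0) \subset B_{nc_h}$.

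For the scale, Alexandrov's estimate applied to $v_\infty$ on $S^c_{v_\infty,h}(0)$, together with the fact that $\det D^2 v_\infty$ is bounded above and below by positive constants (since so are $f, g$), yields $|S^c_{v_\infty,h}(0)| \asymp h^{n/2}$. Since $|E_h| = \omega_n/\sqrt{\det M_h} \asymp |S^c_{v_\infty,h}(0)|$, one deduces $\det M_h \asymp h^{-n}$, whence $c_h \asymp h^{1/2}$. Applying $A$ to the chain \eqref{equi} then yields \eqref{gs1}, after absorbing the constant $b$ into $C$. For \eqref{gs2}, set $\tilde v(y) := v_\infty(A^{-1}y)$; its Legendre transform is $\tilde v^*(x) = u_\infty(Ax)$, so $A^{-1}$ plays the symmetric, $\det=1$ normalising role for the centred section of $u_\infty$ at $x_0 = 0$. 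Since by duality $Dv_\infty$ maps $S_{v_\infty,h}(0)$ onto a set comparable, through \eqref{equi}, to $S_{u_\infty,h}(0)$, the analogous conclusion applied to $u_\infty$ with matrix $A^{-1}$ gives \eqref{gs2}.

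The main obstacle is the eccentricity bound $\|A\|, \|A^{-1}\| \leq Ch^{-\epsilon}$, which translates into semi-axes of $E_h$ lying in $[C^{-1}h^{1/2+\epsilon},\, Ch^{1/2-\epsilon}]$. Under $C^{2,\alpha}$-density hypotheses, Lemma \ref{limsm}(b) would force this eccentricity to be a universal constant; but the present lemma assumes only continuity of $f, g$, so Lemma \ref{limsm}(a) merely gives $v_\infty \in C^{1,\beta}$ for all $\beta<1$. Combining the H\"older estimate $|Dv_\infty(y) - Dv_\infty(0)| \leq C|y|^\beta$ with the strict convexity of $v_\infty$ at $0$ (established immediately before the lemma statement) and the obliqueness of Lemma \ref{obliqueness} (which rules out degeneracy along the direction normal to the flat face $\{y^n=0\}$), one obtains two-sided quantitative bounds $v_\infty(y) \asymp |y|^{1+\beta'}$ in directions into $V_\infty$. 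Iterating these across dyadic scales then produces the polynomial eccentricity bound, with $\epsilon$ arbitrarily small upon choosing $\beta$ sufficiently close to $1$.
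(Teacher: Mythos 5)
Your overall strategy — John's theorem on the centred section, a volume estimate pinning the scale to $h^{1/2}$, the equivalence \eqref{equi} to transfer to sub-level sets, the $C^{1,\beta}$ regularity of Lemma \ref{limsm}(a) (together with the dual regularity of $u_\infty$) to control eccentricity, and Legendre duality for \eqref{gs2} — matches the paper's proof quite closely. The paper cites the normalisation step $\|A\|,\|A^{-1}\|\leq Ch^{-\epsilon}$ to \cite{C96,CLW1}, whereas you sketch a proof of it; your sketch is somewhat compressed (``iterating across dyadic scales'') but the underlying mechanism — a lower bound $a_{\min}\gtrsim h^{1/(1+\beta)}$ from the $C^{1,\beta}$ growth of $v_\infty$, combined with $\Vol(S^c_{v_\infty,h})\asymp h^{n/2}$, and $\beta$ close to $1$ to make $\epsilon$ small — is the right one.

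There is, however, a genuine gap in your proof of \eqref{gs2}. You write ``since by duality $Dv_\infty$ maps $S_{v_\infty,h}(0)$ onto a set comparable, through \eqref{equi}, to $S_{u_\infty,h}(0)$,'' but this is precisely the nontrivial step and it does not follow from duality alone. After normalising so that $S^c_{v_\infty,h}\sim B_{h^{1/2}}\sim S^c_{u_\infty,h}$, one still has to prove
$$B_{C^{-1}h^{1/2}}\cap U_\infty \subset Dv_\infty(B_{h^{1/2}}\cap V_\infty)\subset B_{Ch^{1/2}}\cap U_\infty.$$
The inner inclusion is indeed soft (convexity of $v_\infty$), but the outer inclusion is not: without further input, $Dv_\infty$ could send points of $B_{h^{1/2}}\cap V_\infty$ far outside $B_{Ch^{1/2}}$. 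The paper supplies this via a quantified strict convexity (an engulfing-type inequality) $S_{v_\infty,h/2}\subset \beta S_{v_\infty,h}$ for a universal $\beta\in(0,1)$, which bounds $|Dv_\infty|$ on $S_{v_\infty,h/2}$ by $\frac{1}{2(1-\beta)}h^{1/2}$. You should either prove such an engulfing estimate for $v_\infty$ (which follows from the boundary regularity theory you are invoking) or cite it explicitly; ``by duality'' is not sufficient.
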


\begin{proof}
Let $S_{v_\infty, h}^c$ be the centred section of $v_\infty$ with height $h$. 
From \cite{C96,CLW1}, $\Vol(S_{v_\infty, h}^c)\approx h^{n/2}$.
From Lemma \ref{limsm}-$a)$, for any small $\epsilon>0$, there is a symmetric matrix $A$ with $\det\, A=1$ and $\|A\|,\|A^{-1}\|\leq Ch^{-\epsilon}$ normalising $S_{v_\infty, h}^c$ such that
\beq\label{bb1}
	B_{C^{-1}h^{1/2}} \subset AS_{v_\infty, h}^c \subset B_{Ch^{1/2}}.
\eeq
From \eqref{equi} and the proof of Lemma \ref{obliqueness}, one has
\beq\label{bb2} 
	AS_{v_\infty, h} \sim A\left(S^c_{v_\infty, h}\cap V_\infty\right). 
\eeq
Hence, \eqref{gs1} follows from \eqref{bb1} and \eqref{bb2}. 

Next, we prove \eqref{gs2}. Let $u_\infty$ be the Legendre transform of $v_\infty$. 
Since $S^c_{v_\infty, h}$ is conjugate to $S^c_{u_\infty, h}$ (see \cite{C96,CLW1}), we have
	$$B_{C^{-1}h^{1/2}}  \subset A^{-1}S_{u_\infty, h}^c \subset B_{Ch^{1/2}} .$$
Therefore, in order to obtain \eqref{gs2}, it suffices to show
	\beq\label{ppc2}
	 	Dv_\infty(S_{v_\infty, h}) \sim S_{u_\infty, h}^c \cap U_\infty. 
	\eeq
	
As the inclusion relation is preserved under linear transformation, it will be convenient to prove \eqref{ppc2} in a normalised picture.
Making the transformation:
\begin{align*}
&V_\infty \rightarrow AV_\infty;\ U_\infty\rightarrow A^{-1}U_\infty; \\
& v_\infty(y)\rightarrow v_\infty(A^{-1}y);\ u_\infty(x)\rightarrow u_\infty(Ax),
\end{align*}
 we may assume that
 \begin{equation}\label{inc3}
  S^c_{v_\infty, h}\sim  B_{h^{1/2}}\sim  S^c_{u_\infty, h}.
  \end{equation}
From \eqref{equi}, we have \eqref{ppc2} is equivalent to $Dv_\infty(B_{h^{1/2}}\cap V_\infty) \sim B_{h^{1/2}}\cap U_\infty$, namely
\beq\label{ppc3}
	B_{C^{-1}h^{1/2}}\cap U_\infty \subset Dv_\infty(B_{h^{1/2}}\cap V_\infty) \subset B_{Ch^{1/2}}\cap U_\infty.
\eeq
The first inclusion in \eqref{ppc3} is due to the convexity of $v_\infty$. 
The second inclusion follows from a quantified strict convexity of $v_\infty$, namely there is a universal constant $\beta\in(0,1)$ such that
\beq\label{ppc5}
 S_{v_\infty, h/2} \subset \beta S_{v_\infty, h}.
\eeq
In fact, \eqref{ppc5} implies that $Dv_\infty(S_{v_\infty, h/2}) \subset B_{\frac{1}{2(1-\beta)}h^{1/2}}\cap U_\infty$. 
Hence, we have
$Dv_\infty(B_{h^{1/2}}\cap V_\infty) \subset B_{Ch^{1/2}}\cap U_\infty$ for a universal constant $C>0$ independent of $h$. 
By rescaling back, we then obtain \eqref{gs2}.
\end{proof}

\subsection{Closeness to the limit}\label{s42}

Now, we come back to our original problem. 
In order to show that when $d=\dist(\Om,\Lm)$ is sufficiently large, the original optimal transport problem is a small perturbation of the limit case considered in \S4.1, we shall prove that the active regions $U, V$ are small perturbations of $U_\infty, V_\infty$, and the potentials $u, v$ are small perturbations of $u_\infty, v_\infty$ as well. 

\subsubsection{Closeness of active regions}
We first prove that $U, V$ converge to $U_\infty, V_\infty$ in a $C^{1,\alpha}$ manner as $d\to\infty$. In other words, the free boundaries $\p U\cap\Om, \p V\cap\Lm\in C^{1,\alpha}$ become flatter and flatter, and their $C^{1,\alpha}$ norms are uniformly bounded. 
The exponent $\alpha\in (0,1)$ is the same as in the $C^{1,\alpha}$ result by Caffarelli and McCann \cite{CM}, or equivalently, in Theorem \ref{thm1}. 
The key point is to show that the $C^{1,\alpha}$ estimate is independent of $d$ as $d\to\infty$.

Recall that in \cite[\S7]{CM}, the $C^{1,\alpha}$ regularity of $v$ follows from a quantified, $p$-uniform convexity of $u$. 
Moreover, the exponent $\alpha$ and the $C^{1,\alpha}$ norm of $v$ essentially depend on the modulus of strict convexity of $u$, see \cite[Theorem 7.13]{CM}.
Hence, it suffices to obtain a uniform modulus of strict convexity of $u$ independent of $d$ as $d\to\infty$.

\begin{lemma}\label{loca91}
Assume $d$ is sufficiently large. 
Let $x_0\in\p U\cap \pom$ be a preimage of $y_0\in\p V\cap\Lm$, 
and $R>0$ such that $B_{2R}(x_0)$ contains no preimage of $\p(\p V\cap\Lm)$.
Then, there exists a small constant $h_0$ independent of $d$ such that $\forall$ $h<h_0$,
\beq\label{uni incl}
	S^c_{u,h}(x_0)\subset B_R(x_0).
\eeq
Moreover, $S^c_{u,h}(x_0)\cap U$ is convex, provided $h<h_0'$ for another constant $h_0'$ depending further on $m, R$, but independent of $d$.
\end{lemma}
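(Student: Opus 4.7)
The plan is to use a compactness/perturbation argument, taking advantage of the framework established in \S\ref{s41}--\S\ref{s42}: when $d$ is sufficiently large, the potentials $u, v$ are locally uniformly close to the limit potentials $u_\infty, v_\infty$, and the smoothness of $v_\infty$ provided by Lemma \ref{limsm} can be transferred to quantitative bounds on $u$ that remain uniform in $d$. Throughout I would argue by contradiction.

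For the first assertion, suppose no uniform $h_0$ exists. Then there are sequences of problems indexed by $k$ with $d_k\to\infty$, $h_k\to 0$ and $x_{0,k}\in\p U_k\cap\pom$ so that $S^c_{u_k,h_k}(x_{0,k})\not\subset B_R(x_{0,k})$. After translating and passing to a subsequence, $u_k\to u_\infty$ and $v_k\to v_\infty$ locally uniformly, with $x_{0,k}\to x_0^*$ and $y_{0,k}:=Du_k(x_{0,k})\to y_0^*\in\p V_\infty\cap\Lm$. By Lemma \ref{limsm}, $v_\infty\in C^{2,\alpha}$ up to $\p V_\infty\cap\Lm$ near $y_0^*$, and Lemma \ref{loc11}, applied in its dual form to $u_\infty$, yields that the centred section $S^c_{u_\infty,h}(x_0^*)$ has diameter at most $Ch^{1/2-\eps}$ for $h$ small, with constants that do not depend on $d$. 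Standard continuity of centred sections under uniform convergence of convex functions then forces $S^c_{u_k,h_k}(x_{0,k})\subset B_R(x_{0,k})$ for large $k$, a contradiction; so an $h_0=h_0(R)$ independent of $d$ exists.

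For the second assertion, the strategy is to reduce convexity of $S^c_{u,h}(x_0)\cap U$ to convexity of $S^c_{u,h}(x_0)\cap\Om$ by excluding the free boundary from a neighbourhood of $x_0$. The hypothesis that $B_{2R}(x_0)$ contains no preimage of $\p(\p V\cap\Lm)$ amounts to saying that no \emph{corner point} of $\p U$ (namely no point of $\overline{\p U\cap\Om}\cap\pom$) lies in $B_{2R}(x_0)$. Combined with the convergence of the free boundaries $\p U_k\cap\Om$ in Hausdorff distance to the flat limit $\Om\cap\{x^n=0\}$, which follows from the uniform $C^{1,\alpha}$ estimate of Theorem \ref{thm1} and compactness, the corner condition forces the existence of $r_0=r_0(R,m)>0$ such that, for $d$ sufficiently large, $\p U\cap\Om$ lies at distance at least $r_0$ from $x_0$. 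Hence $B_{r_0}(x_0)\cap\p U\subset\pom$ and $B_{r_0}(x_0)\cap U=B_{r_0}(x_0)\cap\Om$, which is convex as the intersection of two convex sets. Picking $h_0'\leq h_0$ so that $S^c_{u,h}(x_0)\subset B_{r_0}(x_0)$ for every $h<h_0'$, which is possible by the first assertion, we conclude that $S^c_{u,h}(x_0)\cap U=S^c_{u,h}(x_0)\cap\Om$ is convex.

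The main obstacle is the uniformity in $d$ of the separation $r_0$: compactness by itself only gives a limit statement, so one must couple it carefully with the $C^{1,\alpha}$ control of the free boundary to rule out pathological sequences in which the free boundary creeps toward $x_0$ as $d$ grows, even while the corner condition still holds. This is precisely where the dependence $h_0'=h_0'(R,m)$ arises: $R$ controls the distance to the corner through the hypothesis, while $m$ enters via the position of the flat limit free boundary in the sliced limit problem.
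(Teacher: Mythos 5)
Your overall strategy (compactness against the limit problem, then transfer the smoothness of $v_\infty$ from Lemma \ref{limsm} and Lemma \ref{loc11}) is a genuinely different route from the paper's. The paper proves the inclusion \eqref{uni incl} directly and quantitatively within the fixed-$d$ problem: after normalising so that $u\geq 0$, $u(x_0)=0$, $y_0=0$, it uses Corollary \ref{coro1} to exhibit a ball $B_{\hat r}(\hat y)\subset V$ near $y_0$, takes a supporting plane of $u$ with slope $y'\in B_{\hat r/2}(\hat y)$ at some $x'=Dv(y')$, and estimates $u(x_0+te')$ from below; the resulting $h_0$ depends only on $R$, $C^1$ control of $v$, and universal constants, so it is manifestly independent of $d$ and never passes to the limit problem at all. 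Your second part is essentially the paper's: exclude the free boundary from $B_{r_0}(x_0)$ so that $S^c_{u,h}(x_0)\cap U = S^c_{u,h}(x_0)\cap\Om$ is an intersection of two convex sets.

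The gap is in the first part, at the sentence ``Standard continuity of centred sections under uniform convergence of convex functions then forces $S^c_{u_k,h_k}(x_{0,k})\subset B_R(x_{0,k})$ for large $k$.'' There you have two limits running simultaneously, $k\to\infty$ \emph{and} $h_k\to 0$, and the soft statement that centred sections are continuous under $L^\infty$ convergence at a \emph{fixed} height $h$ does not by itself control $S^c_{u_k,h_k}$ when $h_k\to 0$. To close this you would want some monotonicity device: for instance, convert to sub-level sets (which are monotone in $h$), bound $S_{h_*}[u_k](x_{0,k})$ for one fixed small $h_*$ using the limit, and then bound $S_{h_k}[u_k]\subset S_{h_*}[u_k]$; but the conversion back and forth between $S_h[u]$ and $S^c_h[u]$ uses the equivalence \eqref{equi0} of Remark \ref{reeq}, whose constant $b$ requires a quantified, $d$-uniform strict convexity of $u$ up to $\p U\cap\pom$ --- which is precisely what Lemma \ref{loca91} is supposed to deliver (cf.\ Remark \ref{uniC1alpha}). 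As written, your argument either leaves the double-limit unaddressed or risks a circularity through the equivalence constant. The paper avoids both issues by never leaving the fixed-$d$ problem and producing explicit constants. You should either supply the missing uniform control on sections at all small heights (without invoking Remark \ref{reeq} for the $u_k$), or switch to the paper's direct estimate, which is shorter anyway.
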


Note that if $x$ is the preimage of an interior point $y\in \p V\cap\Lm$, the radius $R$ of the small neighbourhood $B_{2R}(x)$ only depends on the inner and outer radii of $V$, which in turn depend only on $\Lm$ and $m$ (the mass been transported), when $d$ is sufficiently large, 

\begin{proof}
By subtracting an affine function, we may assume that $u\geq 0$, $u(x_0)=0,$ and $Du(x_0)=0$, that simultaneously makes $y_0=0\in\p V\cap\Lm$.  
Since $S^c_{u,h}(x_0)$ is balanced about $x_0$, we have $u(x)\leq C_1h$, $\forall$ $x\in S^c_{u,h}(x_0)$, where $C_1>0$ is a universal constant. 

By Corollary \ref{coro1}, we can find a ball $B_{\hat r}(\hat y)\subset V$ for some $\hat y\in V$, where $\hat r:=|\hat y|$.
Then, for any $ e' \in \{e\in\mathbb{S}^{n-1} : e\cdot \hat y\geq0\}$, there exists a point $y'\in B_{\frac{1}{2}\hat r}(\hat y)$ such that $\hat e \cdot y'\geq \hat r/2$.
Since $y'\in V$, there exists a point $x'\in U$ such that 
$(x-x')\cdot y'+u(x')$ is the support plane of $u$ at $x'$. 
Therefore,
\begin{equation*}
\begin{split}
u(x_0+t e')&\geq (x_0+t e'-x')\cdot y'+u(x')\\
&\geq (x_0-x')\cdot y'+u(x')+t e'\cdot y'\\
&\geq (x_0-x')\cdot y'+\frac12t\hat r. 
\end{split}
\end{equation*}
From \S\ref{S3}, $v$ is $C^1$ up to the free boundary $\p V\cap\Lm$, and thus $|x_0-x'|\leq C_2 |y'| = 2C_2\hat r$, for a constant $C_2$ depending on the diameter of $U$. Hence,
\begin{equation*}
 u(x_0+t e') \geq -2C_2\hat r^2 + \frac12t\hat r  > C_1h
\end{equation*}
provided 
$$t\geq \frac{2C_2\hat r^2+C_1h}{\hat r/2}:=R_1.$$
Hence, $x_0+t e'\not\in S^c_{u,h}(x_0)$ for any $ e'\in \{e\in\mathbb{S}^{n-1} : e\cdot \hat y\geq0\}$, provided $t>R_1$. 
Since $S^c_{u,h}(x_0)$ is balanced around $x_0,$
we have $x_0+te\not\in S^c_{u,h}(x_0)$ for any $e\in\mathbb{S}^{n-1}$, provided $t>C_3R_1=R$. 
By tracing it back, for a given $R>0$ satisfying the assumption, letting 
$$\hat r < \frac{R}{8C_2C_3},\quad \mbox{ and }\quad h_0 := \frac{R^2}{32C_1C_2C_3^2}, $$
we can obtain that $S^c_{u,h}(x)\subset B_R(x)$, for any $h<h_0$.

Note that when $d$ is sufficiently large, $\delta$ will be sufficiently small in \eqref{gq1}--\eqref{gq2}. 
By the assumption that $B_{2R}(x_0)$ contains no preimage of $\p(\p V\cap\Lm)$, one can see that $2r_{x_0}:=\dist(x,\p U\cap\Om)>r_0$ for some positive constant $r_0$ depending on $m, R$.
Let $e\in\mathbb{S}^n$ such that $x_0+2r_{x_0}e\in\p U\cap\Om$. 
Then $B_{r_{x_0}}(x_0)$ does not touch the free boundary $\p U\cap\Om$, neither the preimage of $\p(\p V\cap\Lm)$. 
Therefore, from the proof of \eqref{uni incl}, there exists a constant $h_0'$ independent of $d$ such that $S^c_{u,h}(x_0)\subset B_{r_{x_0}}(x_0)$, and $S^c_{u,h}(x_0)\cap U=S^c_{u,h}(x_0)\cap \Omega$ is convex, for all $h<h_0'$. 
\end{proof}

\begin{remark}\label{uniC1alpha}
\emph{
From Lemma \ref{loca91} and the argument in \cite[\S7]{CM}, $u$ is $p$-uniformly convex on $x\in\p U\cap\p\Om$ satisfying the condition of Lemma \ref{loca91}, and by duality one can deduce that 
$v$ is $C^{1,\alpha}$ in the relative interior of free boundary $\p V\cap\Lm$ with the constant independent of $d$. 
Therefore, the $C^{1,\alpha}$ norm of the free boundary $\partial V\cap \Lm$ is independent of $d$.
}
\end{remark}

\subsubsection{Closeness of potentials}
Recall that the active region $V$ satisfies \eqref{gq2} and $\delta\to0$ as $d\to\infty$, namely $V\to V_\infty$ as $d\to\infty$.
%For a technical reason, we make a shift $y\to y+\delta e_n$ such that $\Lm\cap\{y^n<0\}\subset V$. 
In order to compare $v$ and $v_\infty$, by adding a constant, we may assume
$v(x_0)=v_\infty(x_0)$ for some $x_0\in V\cap V_\infty$.
Let $V'\Subset V\cap V_\infty$, we have the following estimates:
\begin{lemma}\label{clolem}
There exists a positive, decreasing function $\omega$ satisfying $\omega(d)\rightarrow 0$, as $d\rightarrow \infty$, such that
\beq\label{C0est}
	\|v-v_\infty\|_{L^\infty(V')}\leq \omega(d).
\eeq
Moreover, if $y\in V'$ and $y+t^{1/2}e\in V'$, where $t>0$ is a constant and $e\in \mathbb{S}^{n-1}$, then when $d$ is sufficiently large such that $\omega(d)<t^{(1+\alpha)/2}$, we have
\beq\label{C1est}
	\left|\left(Dv(y)-Dv_\infty(y)\right)\cdot e\right| \leq C t^{\frac{\alpha}{2}},
\eeq
where the constant $C$ and the exponent $\alpha\in(0,1)$ are independent of $d$, as in Remark \ref{uniC1alpha}.
\end{lemma}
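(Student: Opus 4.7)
My plan is to establish \eqref{C0est} first by a compactness/stability argument, and then to deduce \eqref{C1est} from \eqref{C0est} together with a uniform $C^{1,\alpha}$ Taylor expansion available from Remark \ref{uniC1alpha} and Lemma \ref{limsm}.

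For the $C^0$ estimate, I would argue by contradiction. Suppose there exist a constant $\eta_0>0$ and a sequence $d_k\to\infty$ such that the corresponding potentials $v^{(k)}$ satisfy $\|v^{(k)}-v_\infty\|_{L^\infty(V')}\geq\eta_0$. By Remark \ref{uniC1alpha}, the $C^{1,\alpha}$ norms of $v^{(k)}$ up to the relatively interior free boundary are uniformly bounded independent of $d_k$, and by the first subsubsection of \S\ref{s42} the active regions $V^{(k)}$ converge to $V_\infty$ in the $C^{1,\alpha}$ sense, with $U^{(k)}\to U_\infty$ analogously. Arzelà--Ascoli then extracts a subsequence $v^{(k_j)}$ converging locally uniformly on $V_\infty$ to a convex limit $\tilde v$, with $D v^{(k_j)}\to D\tilde v$ locally uniformly. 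Stability of optimal transport under weak convergence of marginals yields $(D\tilde v)_\# (g\chi_{V_\infty})=f\chi_{U_\infty}$, i.e.\ $\tilde v$ is a potential for the limit problem \eqref{lim2}. The normalisation $v^{(k)}(x_0)=v_\infty(x_0)$ at $x_0\in V\cap V_\infty$ passes to the limit, so by uniqueness of the Brenier potential (up to an additive constant) one concludes $\tilde v=v_\infty$, contradicting $\|v^{(k_j)}-v_\infty\|_{L^\infty(V')}\geq\eta_0$. Taking $\omega(d)$ to be the monotone envelope of the resulting moduli gives \eqref{C0est}.

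For the $C^1$ estimate, fix $y\in V'$ and $h:=t^{1/2}e$ with $y+h\in V'$. Both $v$ and $v_\infty$ are $C^{1,\alpha}$ with uniform norm on a neighbourhood of $V'$, so
\begin{equation*}
v(y+h)-v(y)-Dv(y)\cdot h=O(|h|^{1+\alpha}),\qquad v_\infty(y+h)-v_\infty(y)-Dv_\infty(y)\cdot h=O(|h|^{1+\alpha}).
\end{equation*}
Subtracting and invoking \eqref{C0est} twice,
\begin{equation*}
\bigl|(Dv(y)-Dv_\infty(y))\cdot h\bigr|\le |v(y+h)-v_\infty(y+h)|+|v(y)-v_\infty(y)|+C|h|^{1+\alpha}\le 2\omega(d)+C\,t^{(1+\alpha)/2}.
\end{equation*}
Dividing by $|h|=t^{1/2}$ and using the hypothesis $\omega(d)<t^{(1+\alpha)/2}$ yields
\begin{equation*}
\bigl|(Dv(y)-Dv_\infty(y))\cdot e\bigr|\le 2\omega(d)\,t^{-1/2}+C\,t^{\alpha/2}\le C\,t^{\alpha/2},
\end{equation*}
which is \eqref{C1est}. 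The constant $C$ depends only on the uniform $C^{1,\alpha}$ bounds provided by Remark \ref{uniC1alpha} and Lemma \ref{limsm}, hence is independent of $d$.

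The main obstacle I anticipate is making the stability step of the $C^0$ argument rigorous in the partial-transport setting: one needs that the free-boundary portion of $\partial V^{(k)}\cap\Omega^*$ converges to $\partial V_\infty\cap\Omega^*$ in a way strong enough that the pushforward relation passes to the limit, and that uniqueness of the Brenier potential on $V_\infty$ (whose boundary is only Lipschitz along the flat face) holds modulo constants. Both follow, however, from the uniform $C^{1,\alpha}$ control on the free boundary established in the first subsubsection of \S\ref{s42} and the convexity of $V_\infty$ together with strict positivity of the densities.
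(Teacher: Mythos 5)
Your proposal is correct and takes essentially the same approach as the paper: the $C^0$ bound is obtained by a compactness argument (the paper simply cites \cite[Lemma 4.1]{CF}), and the $C^1$ bound follows by comparing first-order expansions of $v$ and $v_\infty$ over a segment of length $t^{1/2}$ and then dividing by $t^{1/2}$. The only cosmetic difference is that the paper gets each of the two one-sided inequalities from convexity of one potential plus the uniform $C^{1,\alpha}$ expansion of the other (then symmetrises), whereas you invoke the uniform $C^{1,\alpha}$ Taylor expansion for both $v$ and $v_\infty$ simultaneously; since both directions ultimately require $v\in C^{1,\alpha}$ uniformly (Remark \ref{uniC1alpha}), the two presentations are equivalent.
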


\begin{proof}
The estimate \eqref{C0est} follows from a standard compactness argument. We refer the reader to, for example, \cite[Lemma 4.1]{CF}.
It remains to prove \eqref{C1est}. 
From Lemma \ref{limsm}, the limit potential $v_\infty$ is $C^{1,\alpha}$ and thus
	$$ v_\infty(y+t^{\frac12}e) \leq v_\infty(y) + t^{\frac12} e\cdot Dv_\infty(y) + Ct^{\frac12(1+\alpha)}. $$
Meanwhile, by the convexity of $v$ we have
	$$ v(y+t^{\frac12}e) \geq v(y) + t^{\frac12} e\cdot Dv(y). $$
Then, from \eqref{C0est} we obtain
	\begin{equation*} 
	\begin{split}
		\left(Dv(y)-Dv_\infty(y)\right)\cdot e &\leq Ct^{\frac{\alpha}{2}} + 2t^{-\frac12}\omega(d) \\
			&\leq Ct^{\frac{\alpha}{2}}. 
	\end{split}
	\end{equation*}
By exchanging $v$ and $v_\infty$, we also have $\left(Dv(y)-Dv_\infty(y)\right)\cdot e \geq -Ct^{\frac{\alpha}{2}}$. 
\end{proof}

\subsection{Perturbation estimates}\label{s43}

Now we are ready to obtain higher regularity of $v$ up to the free boundary $\p V\cap\Lm$. 
The strategy is that: in \S\ref{locali} we first localise the problem by normalising a small sub-level set of $v_\infty$ at a point on $\p V_\infty\cap\Lm$; then in \S\ref{appro} we show that $v$ will be even closer to a convex function $w$ solving an optimal transport with constant densities, which enables an iteration argument to apply in \S\ref{s44}.  
%last in \S\ref{bingo} by rescaling and iterating we can obtain higher regularity of $v$. 

\subsubsection{Localisation and normalisation}\label{locali}

Let $y_0\in\p V_\infty\cap\Lm$, and $x_0=Dv_\infty(y_0)\in \p U_\infty\cap\pom$. 
By translating the coordinates we may assume that $x_0=y_0=0$. 
Thanks to the obliqueness Lemma \ref{obliqueness} we can also assume that $\p U_\infty$ and $\p V_\infty$ have the unit inner normal $-e_n$ at the origin. 
Let $h>0$ small, and $S_{v_\infty, h}$ be the sub-level set of $v_\infty$ at the origin. 
By Lemma \ref{loc11}, up to an affine transformation $A$ with $\|A\|, \|A^{-1}\|\leq h^{-\epsilon}$, we also have
\begin{eqnarray}\label{da1}
 && C^{-1}B_{\sqrt{h}}\cap V_\infty\subset S_{v_\infty,h}\subset CB_{\sqrt{h}}\cap V_\infty, \\
 && C^{-1}B_{\sqrt{h}}\cap U_\infty\subset Dv_\infty(S_{v_\infty, h})\subset CB_{\sqrt{h}}\cap U_\infty. \nonumber
\end{eqnarray}

By Lemma \ref{clolem}, we can estimate the image of $S_{v_\infty,h}$ under the mapping $Dv$. 
In fact, set $t=h^{2/\alpha}$ in Lemma \ref{clolem}. 
For $y\in S_{v_\infty,h}$, if $y^n\leq-t^{1/2}$, then $y+t^{1/2}e\in V$ for any $e\in\mathbb{S}^{n-1}$, and thus from \eqref{C1est}
\beq\label{C1diff} 
	|Dv(y)-Dv_\infty(y)| \leq Ct^{\frac{\alpha}{2}} \leq Ch. 
\eeq
If $-t^{1/2}<y^n<0$, one can see that $y+t^{1/2}e\in V$, provided $e\in\mathbb{S}^{n-1}$ satisfies
\beq\label{gdic} 
	-e_n\cdot e > \theta(d)
\eeq
for some nonnegative, decreasing function $\theta\leq1$ with $\theta\to0$ as $d\to\infty$. 
Thus from \eqref{C1est}, $|(Dv(y)-Dv_\infty(y))\cdot e| \leq Ct^{\frac{\alpha}{2}} \leq Ch$ for those $e$ satisfying \eqref{gdic}. 
Meanwhile, by the $C^{1,\alpha}$ regularity of $v_\infty$, we have
$$ \dist(Dv_\infty(y), \p U_\infty) < Ct^{\frac{\alpha}{2}} \leq Ch. $$
Therefore, noting that $U\to U_\infty$ and $V\to V_\infty$, as $d\to\infty$, together with \eqref{C1diff} we obtain
\beq\label{shape1}
 Dv(S_{v_\infty,h}\cap V) \subset CB_{\sqrt{h}}\cap U.  
\eeq

On the other hand, by Remark \ref{reeq} and a similar argument as above applying to the dual potential $u$, we can also obtain 
\begin{equation}\label{shape2}
Du(C^{-1}B_{\sqrt{h}}\cap U)\subset S_{v_\infty,h}\cap V. 
\end{equation}

Combining \eqref{shape1} and \eqref{shape2}, we have 
\begin{equation}\label{shape3}
C^{-1}B_{\sqrt{h}}\cap U \subset Dv(S_{v_\infty,h}\cap V)\subset CB_{\sqrt{h}}\cap U.
\end{equation}
Since $V$ is arbitrarily close to $V_\infty$ provided $d$ is sufficiently large, from \eqref{da1} by enlarging $C$ slightly we also have
\begin{equation}\label{shape4}
C^{-1}B_{\sqrt{h}} \cap V\subset S_{v_\infty,h}\cap V\subset CB_{\sqrt{h}}\cap V.
\end{equation}

%\vskip5pt
Having \eqref{shape3} and \eqref{shape4}, we make the rescaling 
\beq 
	x\mapsto \frac{x}{\sqrt{h}},\qquad  y\mapsto\frac{y}{\sqrt{h}}, \label{dilate1} 
\eeq
correspondingly
\beq 
	u(x) \to \frac{1}{h} u(\sqrt{h}x),\qquad  v(y) \to \frac{1}{h}v(\sqrt{h}y),  \label{dilate2} 
\eeq
and the densities become
$$ f(x) \to f(\sqrt{h}x),\qquad g(y) \to g(\sqrt{h}y). $$
Under the above rescaling, we have
\begin{eqnarray*}
&& Dv(S_{v_\infty,h}\cap V) \longrightarrow  \frac{1}{\sqrt{h}}Dv(S_{v_\infty,h}\cap V) =: \mathcal{C}_1, \\
&& S_{v_\infty,h}\cap V \longrightarrow \frac{1}{\sqrt{h}}(S_{v_\infty,h}\cap V) =:\mathcal{C}_2. 
\end{eqnarray*}

Hence, by \eqref{shape3} and \eqref{shape4} we have the initial setting of domains
\begin{equation}\label{shape5}
B_{1/C}\cap U\subset \mathcal{C}_1\subset B_C\cap U,
\end{equation}
and
\begin{equation}\label{shape006}
B_{1/C}\cap V\subset \mathcal{C}_2\subset B_C\cap V.
\end{equation}
 
\vskip 5pt

Next we show that how the above transformation $A$ and dilation \eqref{dilate1} would affect the fixed boundary $\p U\cap\pom$ and the free boundary $\p V\cap\Lm$ locally near the origin. 

Let $y_1:=\{te^n:t\in (-\delta, \delta)\}\cap \partial V,$ where $\delta\to0$ as $d\to\infty$.
Denote $x_1:=Dv(y_1)$.
By $C^{1,\alpha}$ continuity of $v$ we also have $x_1\to 0$ as $d\to\infty$. 
It is easy to see that the unit outer normal of $V$ (resp. $U$) at $y_1$ (resp. $x_1$) converges to $e_n$ as $d\to\infty$.
By a translation of coordinates: $$x\rightarrow x+Dv(y_1),\ y\rightarrow y+y_1,$$ and an affine transformation $A'$:
$$U\rightarrow A'U,\ V\rightarrow A'^{-1}V$$ 
we can assume that the normal of $V$ at $y_1$ is the same to that of $U$ at $x_1$. 
Note that the scale of translation converges to $0$, and $A'$ converges to the identity matrix as $d\rightarrow \infty$. 
These above transformations preserve the initial settings \eqref{shape3}, \eqref{shape4} (by enlarging $C$ slightly).
And under these changes, we may assume that $x_1=y_1=0$ and the unit outer normal of $U, V$ at $0$ is $e_n$

Hence, locally near the origin, $\p U$ and $\p V$ can be represented as 
\beq\label{locbdry}
\p U=\{\{x^n = P(x')\}\},\quad \p V=\{y^n = Q(y')\},
\eeq
for two functions $P\in C^{1,1}$ and $Q\in C^{1,\alpha}$ satisfying $P(0)=Q(0)=0$, $DP(0)=DQ(0)=0$.
Since the transformation $A$ satisfies $\|A\|, \|A^{-1}\|\leq Ch^{-\epsilon}$ for $\epsilon$ as small as we want and the dilation 
 \eqref{dilate1}, \eqref{dilate2} is of scale $h^{1/2}$, under these above changes, one has
\beq\label{locbdry2}
\|P\|_{C^{1,1}(\p U\cap\p{\mathcal{C}}_1)} \leq \delta_0, \quad \|Q\|_{C^{1,\alpha}(\p V\cap\p\mathcal{C}_2)} \leq \delta_0,
\eeq
where $\delta_0$ can be as small as we want, provided $h$ is sufficiently small.

\vskip5pt

The following lemma is a useful tool to derive the regularity of $u$. The proof follows from that of \cite[Theorem 2.1]{CF} by an iteration argument. For the sake of brevity, we omit it here and will give more details in \S4.4.1.
 \begin{lemma}\label{le111}
 Let $\mathcal{C}_1, \mathcal{C}_2, P, Q$ be as above.
Let $f, g$ be two densities supported in $\mathcal{C}_1$ and $\mathcal{C}_2$, respectively.
Let $v: \mathcal{C}_2\rightarrow \mathbb{R}$ be
a convex function such that $\partial v(\mathcal{C}_2)\subset B_C$ and $(Dv)_{\#}g=f$.
Then, for any $\beta\in (0,1)$, there exist constants $\delta_0$,
$\eta_0>0$ such that if: 
\begin{equation}\label{401}
\|P\|_{C^{1,1}(\p U\cap\p\mathcal{C}_1)}+ \|Q\|_{C^{1,\alpha}(\p V\cap\p\mathcal{C}_2)}\leq \delta_0,
\end{equation}
\begin{equation}\label{402}
\|f- {1}\|_{L^\infty(\mathcal C_1)}+\|g- {1}\|_{L^\infty(\mathcal C_2)}\leq \delta_0,
\end{equation}
and \begin{equation}\label{eq:uP}
\biggl\|v-\frac{1}{2}|y|^2\biggr\|_{L^\infty(\mathcal{C}_2)}\leq \eta_0,
\end{equation}
we can obtain $v\in C^{1,\beta}(\overline{\mathcal{C}_2\cap B_{r_0}})$, for a small constant $r_0>0$.
\end{lemma}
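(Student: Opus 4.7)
The plan is to follow the compactness-plus-iteration scheme of \cite[Theorem 2.1]{CF}, viewing $v$ as a small perturbation of the model solution $\frac{1}{2}|y|^2$, which is the optimal potential transporting the constant density on $\{y^n<0\}\cap B_1$ onto itself. The core ingredient is a quantitative one-step improvement of the quadratic approximation of $v$ at the origin, which, applied geometrically at successively finer scales, produces the desired $C^{1,\beta}$ modulus of $Dv$ via a Campanato-type characterisation.

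\textbf{One-step improvement.} Given $\beta\in(0,1)$ and a small $\theta>0$, I first claim there exist $\delta_0,\eta_0>0$ such that under \eqref{401}--\eqref{eq:uP} one can find a symmetric matrix $M$ with $\det M=1$, $\|M-I\|\leq C\eta_0^{1/2}$, and an affine function $p\cdot y+c$ satisfying
\[
\bigl\|v(y)-\tfrac{1}{2}\langle My,y\rangle - p\cdot y - c\bigr\|_{L^\infty(\mathcal{C}_2\cap B_\theta)} \leq \eta_0\,\theta^{1+\beta}.
\]
This is proved by contradiction and compactness: assuming sequences $\delta_k,\eta_k\to 0$ with counterexamples $v_k$, the rescalings $w_k:=(v_k-\frac{1}{2}|y|^2)/\eta_k$ are uniformly bounded, and by the nonlinear stability of Monge-Amp\`ere on convex domains $v_k\to\frac{1}{2}|y|^2$ locally uniformly, while $w_k$ converges along a subsequence to a limit $w$ solving a linearised, constant-coefficient, uniformly elliptic equation on $\{y^n<0\}\cap B_1$ with an oblique/Neumann condition on the flat part of $\partial V_\infty$. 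Boundary Schauder theory then gives $w\in C^{2,\alpha}$ at the origin and yields a second-order Taylor polynomial there, which after projection to $\det =1$ produces the matrix $M$ and affine piece contradicting the assumed failure.

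\textbf{Iteration and rescaling.} Given the one-step estimate, I would renormalise via
\[
\tilde y := \theta^{-1}M^{1/2}y, \qquad \tilde v(\tilde y) := \theta^{-2}\bigl[v(\theta M^{-1/2}\tilde y) - p\cdot(\theta M^{-1/2}\tilde y) - c\bigr].
\]
Then $\tilde v$ is again a convex optimal transport potential on new domains $\tilde{\mathcal{C}}_1,\tilde{\mathcal{C}}_2$ with densities still satisfying \eqref{402}, while the rescaled boundary graphs $\tilde P,\tilde Q$ still satisfy \eqref{401} because $y\mapsto\theta^{-1}y$ contracts the $C^{1,1}$ and $C^{1,\alpha}$ norms by positive powers of $\theta$. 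The new closeness to $\frac{1}{2}|\tilde y|^2$ is bounded by $\eta_0\theta^{\beta-1}$; choosing $\eta_0\ll\theta^{1-\beta}$ so that this is at most $\eta_0\theta^{2\beta}$, one obtains a geometric improvement by the factor $\theta^{2\beta}<1$. Iterating produces matrices $M_k\to M_\infty$ and affine functions $\ell_k\to\ell_\infty$ with
\[
\bigl\|v-\tfrac{1}{2}\langle M_k y,y\rangle - \ell_k\bigr\|_{L^\infty(\mathcal{C}_2\cap B_{\theta^k})} \leq \eta_0\,\theta^{k(1+\beta)},
\]
together with $\|\ell_{k+1}-\ell_k\|\leq C\theta^{k\beta}$; this is precisely the Campanato-type decay characterising $C^{1,\beta}$ regularity of $v$ at the origin.

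\textbf{Conclusion and main obstacle.} Running the same argument at every boundary point near the origin (with uniform constants, as the hypotheses are translation-invariant up to scale) gives $v\in C^{1,\beta}(\overline{\mathcal{C}_2\cap B_{r_0}})$. The principal technical difficulty is controlling the accumulated linear distortion $\prod_k M_k^{1/2}$: each $M_k$ is only within $C\eta_k^{1/2}\leq C\eta_0^{1/2}\theta^{k\beta}$ of the identity, so the product converges, but verifying that the rescaled problem continues to satisfy \eqref{401} and \eqref{402} with the same absolute constants $\delta_0$ requires careful bookkeeping of how $\|P\|_{C^{1,1}}$ and $\|Q\|_{C^{1,\alpha}}$ transform under composition with $M_k^{-1/2}$. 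The summability of the distortions guarantees that the induction closes, provided $\eta_0$ and $\delta_0$ are chosen small enough at the outset. Note that the free-boundary portion of $\partial V$ enters only through \eqref{401}, where $Q$ is $C^{1,\alpha}$ rather than $C^{1,1}$; since the iteration only uses smallness of these norms (not improvement), no additional obstacle arises from the mixed boundary structure.
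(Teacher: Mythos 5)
Your overall scheme — a compactness-based one-step improvement of the quadratic approximation, renormalisation by $\theta^{-1}M^{1/2}$, and iteration — is exactly the strategy of Chen--Figalli \cite{CF}, which is what the paper cites as the source of this lemma (the paper deliberately omits the proof; see also Remark~\ref{rrmk} for the single modification, namely that $C^{1,\alpha}$ control of $Q$ suffices). So the route you chose is the intended one. However, there is a genuine gap in the exponents that makes the iteration diverge rather than close.

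You claim the one-step improvement
\[
\bigl\|v-\tfrac12\langle My,y\rangle-p\cdot y-c\bigr\|_{L^\infty(\mathcal{C}_2\cap B_\theta)}\le \eta_0\,\theta^{1+\beta},
\]
and you correctly compute that after the parabolic renormalisation $\tilde v(\tilde y):=\theta^{-2}\bigl[v(\theta M^{-1/2}\tilde y)-p\cdot(\theta M^{-1/2}\tilde y)-c\bigr]$ the new $L^\infty$-distance of $\tilde v$ to $\tfrac12|\tilde y|^2$ is $\eta_0\theta^{\beta-1}$. But $\beta<1$ and $\theta<1$ give $\theta^{\beta-1}>1$: the error \emph{grows}, so hypothesis \eqref{eq:uP} fails after the very first step. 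Your proposed repair, ``choose $\eta_0\ll\theta^{1-\beta}$ so that $\eta_0\theta^{\beta-1}\le\eta_0\theta^{2\beta}$'', is algebraically impossible — $\eta_0$ cancels, leaving $\theta^{\beta-1}\le\theta^{2\beta}$, i.e.\ $\beta\le-1$; shrinking $\eta_0$ cannot help. The one-step improvement must beat the $\theta^{-2}$ parabolic rescaling, so the correct claim is of the form
\[
\bigl\|v-\tfrac12\langle My,y\rangle-p\cdot y-c\bigr\|_{L^\infty(\mathcal{C}_2\cap B_\theta)}\le C_0\,\eta_0\,\theta^{2+\gamma}
\]
for a fixed $\gamma>0$, coming from $C^{2,\gamma}$ boundary Schauder estimates for the Neumann/oblique limit problem satisfied by $w=\lim_k(v_k-\frac12|y|^2)/\eta_k$. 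One then fixes $\theta$ with $C_0\theta^{\gamma}<1$, obtaining the geometric decay $\eta_{k+1}\le C_0\theta^{\gamma}\eta_k$ that the Campanato argument actually needs; the exponent $1+\beta$ only appears at the end, after absorbing the bounded quadratic parts into the affine comparison, not in the improvement step itself. Two smaller points: $\|M-I\|$ should be $O(\eta_0)$, not $O(\eta_0^{1/2})$, since $M\approx I+\eta_0 D^2w(0)$; and the reason $Q\in C^{1,\alpha}$ suffices is precisely the contraction $[D\tilde Q]_{C^{0,\alpha}}\le C\theta^{\alpha}[DQ]_{C^{0,\alpha}}$ under the dilation, i.e.\ the geometric flattening emphasised in Remark~\ref{rrmk} — you mention this contraction but then state that the iteration ``only uses smallness, not improvement,'' which undersells the mechanism that keeps \eqref{401} satisfied uniformly in $k$.
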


\begin{remark}\label{rrmk}
\emph{
Note that in \cite[Theorem 2.1]{CF}, it was assumed $\|P\|_{C^{2}}+ \|Q\|_{C^2}\leq \delta_0,$ but actually $C^{1,\alpha}$ bound is
sufficient. Indeed, the property needed in the iteration argument is that when we successively apply the affine transformations and dilations to normalise sub-level sets of height $h$, the boundaries $\p U\cap\p\mathcal{C}_1$ and $\p V\cap\p\mathcal{C}_2$ will become flatter and flatter as $h\to0$. This can be easily verified when \eqref{401} holds. 
}
\end{remark}

Therefore, in order to obtain the $C^{1,\beta}$ regularity of $v$, it suffices to verify the conditions \eqref{401}, \eqref{402}, and \eqref{eq:uP} after normalising a sub-level set $S_{v_\infty,h}$ for $h>0$ small, that enables the iteration argument in \S\ref{s44} to apply. 
We will verify these conditions in \S\ref{appro}. 

\subsubsection{An approximate problem}\label{appro}
Note that by the rescaling \eqref{dilate1} --\eqref{dilate2}, the original sub-level set $S_{v_\infty,h}$ becomes $S_{v_\infty,1}=\{y\in V_\infty : v_\infty(y)<1\}$.
Denote $S^*_1:=S_{v_\infty,1}$, and $B^-_r=B_r\cap\{x^n<0\}$.
Let 
$$\mathcal{D}_1:=\mathcal{C}_1\cup B^-_{1/C},\quad \mathcal{D}_2:= \rho S^*_1,$$
where $\rho>0$ is a constant chosen such that $|\mathcal{D}_1|=|\mathcal{D}_2|$.
Note that the dilation \eqref{dilate1} does not change the ``flat" free boundary $\p V_\infty\cap\Lm$, but makes the fixed boundary $\pom$ flatter and flatter near the origin. 
In fact,
\begin{equation*}
\mathcal{D}_1\backslash\mathcal{C}_1\subset B_{1/C}\cap \{-\delta<x^n<0\},
\end{equation*}
where $\delta \to 0$ as $h \to 0$, and moreover, the constant $\rho= {|\mathcal{D}_1|}/{|S^*_1|} \to 1$ as $h\to 0$, due to the measure preserving condition.  
When $d=\dist(\Omega, {\Omega^*})$ is sufficiently large, we have $|\mathcal{D}_1-\mathcal{C}_1|$ and $|\mathcal{D}_2-\mathcal{C}_2|$ is as small as we want. 
For densities, the dilation \eqref{dilate1} makes $f\chi_{\mathcal{C}_1}$ (resp. $g\chi_{\mathcal{C}_2}$) as close to $\chi_{\mathcal{C}_1}$ (resp. $\chi_{\mathcal{C}_2}$) as we want, provided $h$ is small enough. 

\vskip5pt
By the above observations, we construct an approximate optimal transport problem as follows.  
Let ${w}$ be the convex function solving 
$$(D{w})_\#\chi_{\mathcal{D}_1}=\chi_ {\mathcal{D}_2}$$
and ${w}(0)=u(0)$.
Then, by a standard compactness argument we have
\begin{lemma}\label{newC0}
$\|u-{w}\|_{L^\infty(\mathcal{C}_1)}\leq \delta_2,$
where $\delta_2$ can be as small as we want, provided $h$ is small enough and $d$ is large enough.
\end{lemma}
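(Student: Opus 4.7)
The plan is a standard compactness-stability argument by contradiction. Suppose the conclusion fails: then there exist a constant $\delta > 0$ and sequences $h_k \to 0$, $d_k \to \infty$ along which the rescaled potentials $u_k$ (from the original partial transport problem, localised and normalised as in \S\ref{locali}) and ${w}_k$ (from the approximate problem with unit densities) satisfy $\|u_k - {w}_k\|_{L^\infty(\mathcal{C}_1^{(k)})} > \delta$, while $u_k(0) = {w}_k(0)$. My goal is to show that both families converge, along a subsequence, to the same convex function, which contradicts the above separation.

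First, I would set up uniform compactness of both families. By the initial set-up \eqref{shape5}--\eqref{shape006}, the targets $\mathcal{C}_2^{(k)}, \mathcal{D}_2^{(k)}$ are contained in $B_C$, so $|Du_k|, |D{w}_k| \leq C$ in the Alexandrov sense and both families are uniformly Lipschitz on $B_{2C}$. Combined with the common normalisation at the origin, the Arzelà--Ascoli theorem yields (after passing to a subsequence) locally uniform convergence $u_k \to u_\star$, ${w}_k \to {w}_\star$ to convex Lipschitz functions.

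Next, I would verify that $u_\star$ and ${w}_\star$ solve the same optimal transport problem. By the construction of $\mathcal{D}_1^{(k)}, \mathcal{D}_2^{(k)}$, the symmetric differences $\mathcal{D}_i^{(k)} \triangle \mathcal{C}_i^{(k)}$ lie in $B_{1/C} \cap \{-\delta_k < x^n < 0\}$ with $\delta_k \to 0$, and the rescaling factor $\rho_k = |\mathcal{D}_1^{(k)}|/|S_1^*| \to 1$ by the measure-preserving condition. Similarly, the pulled-back densities $f(\sqrt{h_k}\,\cdot), g(\sqrt{h_k}\,\cdot)$ converge uniformly to $1$ on the limit domains since $f, g \in C^0$ (or $C^\alpha$). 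Hence both sequences of source and target measures converge weakly to the \emph{same} pair of uniform measures on the common Hausdorff limit of the domains. By the stability of Brenier's optimal transport map under weak convergence of marginals with bounded densities, $u_\star$ and ${w}_\star$ both push forward the limit source measure to the limit target measure. Uniqueness of the Brenier potential up to an additive constant, together with $u_\star(0) = {w}_\star(0)$, forces $u_\star \equiv {w}_\star$, contradicting $\|u_k - {w}_k\|_\infty > \delta$.

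The main obstacle I anticipate is not the convergence itself but ensuring that the limit still corresponds to a genuine transport problem with a non-degenerate target: one needs the rescaled domains $\mathcal{C}_i^{(k)}$ and $\mathcal{D}_i^{(k)}$ to converge to the \emph{same} limits, which relies on the strict convexity of $v_\infty$ (established via Lemma \ref{limsm} and \ref{loc11}) so that $S_{v_\infty,h}$ has the balanced geometry \eqref{shape4}, and on the obliqueness Lemma \ref{obliqueness} so that the free boundary piece $\{y^n = Q(y')\}$ flattens under the dilation \eqref{dilate1}. These structural inputs are already available, and the remaining stability step is essentially \cite[Lemma~4.1]{CF}, so no further technical innovation is required.
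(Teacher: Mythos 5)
Your proposal is correct and matches the paper's approach: the paper itself simply invokes ``a standard compactness argument'' (pointing to \cite[Lemma 4.1]{CF} earlier in Lemma \ref{clolem}), and your contradiction--compactness scheme — uniform Lipschitz bounds from the gradient landing in $B_C$, Arzel\`a--Ascoli, Hausdorff convergence of $\mathcal{C}_i^{(k)}$ and $\mathcal{D}_i^{(k)}$ to common limits, weak stability of Brenier maps, and uniqueness of the potential up to a constant pinned down by $u_k(0)=w_k(0)$ — is exactly that argument spelled out.
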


In order to show ${w}$ is indeed smooth near $0$, we use a symmetrisation method.
Let $$\tilde{\mathcal{D}}_1:=\mathcal{D}_1\cup (\mathcal{D}_1)^+$$
and $$\tilde{\mathcal{D}}_2:=\mathcal{D}_2\cup (\mathcal{D}_2)^+,$$ where $(E)^+$ denotes the reflection of the set $E$ with respect
to the hyperplane $\{x^n=0\}.$
Since $D_nv_\infty\leq 0$ in $\mathcal{D}_2$, we see that $\tilde{\mathcal{D}}_2$ is convex. 

Let $\tilde{w}$ be the convex function solving 
	$$(D\tilde{w})_\# \chi_{\tilde{\mathcal{D}}_1} = \chi_{\tilde{\mathcal{D}}_2}$$
and $\tilde{w}(0)={w}(0)$.
By symmetry and the uniqueness of optimal transport maps, we have $\tilde{w}={w}$ in $\mathcal{D}_1$. 
Since the target $\tilde{\mathcal{D}}_2$ is convex, by Caffarelli's interior regularity results we have $\tilde{w}\in C^3(B_{\frac{1}{2C}})$ and 
$\|\tilde{w}\|_{C^3(B_{\frac{1}{2C}})}\leq M$ for some universal constant $M$. 
Hence, as $\tilde w(0)=0$, $D\tilde w(0)=0$, locally near the origin
	$$\tilde{w}(x)=\frac{1}{2}D^2\tilde{w}(0)x \cdot x+O(|x|^3).$$ 
Note that by symmetry, we have $\tilde{w}_{n}=0$ along $\{x^n=0\}$, hence
$\tilde{w}_{\alpha n}(0)=0$ for $\alpha=1, \cdots,n-1.$ 
By an affine transformation preserving the $e_n$ direction, we can assume that
 \begin{equation}\label{taylor1}
 \tilde{w}(x)=\frac{1}{2}|x|^2+O(|x|^3).
 \end{equation}
 
The following lemma gives a local estimate.
\begin{lemma}\label{localC0}
For any $\eta>0$ small, there exists small positive constant $\epsilon_0$ such that
 \begin{eqnarray}
&&\|u-\frac{1}{2}|x|^2\|_{L^{\infty}(B_{\epsilon_0}\cap U)}\leq \eta \epsilon_0^2, \label{taylor2} \\
&&\|v-\frac{1}{2}|y|^2\|_{L^{\infty}(B_{\epsilon_0}\cap V)}\leq \eta \epsilon_0^2, \label{taylor3}
 \end{eqnarray}
provided $d$ is large enough. 
\end{lemma}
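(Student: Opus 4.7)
The plan is to combine the Taylor expansion \eqref{taylor1} of the symmetrised potential $\tilde w$ with the $C^0$ approximation in Lemma \ref{newC0}, and then pass to the dual to obtain the estimate for $v$.

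First I would fix the universal constant $M>0$ governing the third-order term in \eqref{taylor1}, so that
\[
\bigl|\tilde w(x)-\tfrac12|x|^2\bigr|\leq M|x|^3 \quad\text{for }x\in B_{1/(2C)}.
\]
Given $\eta>0$, choose $\epsilon_0>0$ small enough that $\epsilon_0\leq 1/(2C)$ and $M\epsilon_0\leq \eta/2$. Since $\tilde w\equiv w$ on $\mathcal{D}_1$ by construction and $B_{\epsilon_0}\cap U\subset \mathcal{C}_1\subset\mathcal{D}_1$ (the inclusion $B_{1/C}\cap U\subset \mathcal{C}_1$ comes from \eqref{shape5}), this yields
\[
\bigl\|w-\tfrac12|x|^2\bigr\|_{L^\infty(B_{\epsilon_0}\cap U)}\leq M\epsilon_0^3\leq \tfrac{\eta}{2}\epsilon_0^2.
\]

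Next, with $\epsilon_0$ now fixed (independently of $h$ and $d$), I would invoke Lemma \ref{newC0}: by first choosing $h$ sufficiently small and then $d$ sufficiently large, we can force the approximation error $\delta_2$ in $\|u-w\|_{L^\infty(\mathcal{C}_1)}\leq\delta_2$ to satisfy $\delta_2\leq \tfrac{\eta}{2}\epsilon_0^2$. The triangle inequality then gives \eqref{taylor2}:
\[
\bigl\|u-\tfrac12|x|^2\bigr\|_{L^\infty(B_{\epsilon_0}\cap U)}\leq \bigl\|u-w\bigr\|_{L^\infty(\mathcal{C}_1)}+\bigl\|w-\tfrac12|x|^2\bigr\|_{L^\infty(B_{\epsilon_0}\cap U)}\leq \eta\epsilon_0^2.
\]

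For the estimate \eqref{taylor3} on $v$, I would run the entirely symmetric argument on the dual side. Let $w^*$ denote the Legendre transform of $w$; then $w^*$ solves the reverse optimal transport problem $(Dw^*)_\#\chi_{\mathcal{D}_2}=\chi_{\mathcal{D}_1}$. The symmetrisation of the target $\mathcal{D}_1$ produces a convex set $\tilde{\mathcal{D}}_1$ (here one uses the analogue of the sign condition $D_n v_\infty\leq 0$, namely $D_n u_\infty\geq 0$, which follows from obliqueness Lemma \ref{obliqueness}), and the corresponding symmetrised dual potential $\widetilde{w^*}\in C^3$ with $\widetilde{w^*}(0)=0$, $D\widetilde{w^*}(0)=0$, and $\widetilde{w^*}_{\alpha n}(0)=0$. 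The same affine normalisation used for $\tilde w$ forces the Hessian $D^2\widetilde{w^*}(0)$ to be the identity (the two Hessians are mutually inverse), so $\widetilde{w^*}(y)=\tfrac12|y|^2+O(|y|^3)$ near the origin. Combining this with the dual analogue of Lemma \ref{newC0}, $\|v-w^*\|_{L^\infty(\mathcal{C}_2)}\leq\delta_2'$, which follows from the same compactness argument, yields \eqref{taylor3} after the same choice of small $\epsilon_0$ and large $d$.

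The main obstacle is to make sure $\epsilon_0$ is genuinely chosen independently of the parameters $h$ and $d$ that enter through the normalisation and the approximation step. This is handled by selecting $\epsilon_0$ purely from the universal third-order constant $M$ for $\tilde w$ (which in turn depends only on the $C^3$ interior estimates of Caffarelli on the convex symmetrised target, and hence is universal), and only afterwards letting $h\to 0$ and $d\to\infty$ to shrink $\delta_2$. A secondary subtlety, already accounted for in the construction, is the compatibility of the two affine normalisations on the primal and dual sides, which is ensured by the symmetry of the Hessian normalisation under Legendre transform.
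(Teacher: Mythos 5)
Your plan for \eqref{taylor2} matches the paper's argument closely: you combine the $C^3$ Taylor expansion \eqref{taylor1} of $\tilde w$ (with a universal third-order constant $M$) with the $C^0$ approximation in Lemma \ref{newC0}, and you are appropriately careful about the logical order of the choices — $\epsilon_0$ is fixed first from $M$ and $\eta$, and only afterwards $h$ is taken small and $d$ large so that $\delta_2\leq \frac{\eta}{2}\epsilon_0^2$. The paper compresses this into ``taking $\epsilon_0$ sufficiently small and $d$ sufficiently large,'' but the quantifier order you spell out is indeed the correct one, and your statement $\|u-\tfrac12|x|^2\|_{L^\infty(B_{\epsilon_0}\cap U)}\leq \delta_2+M\epsilon_0^3$ is precisely what the paper writes.

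For \eqref{taylor3} the paper simply invokes Legendre duality; you instead run a parallel argument on the dual side, and here there is one step that does not hold up. You assert that symmetrising $\mathcal{D}_1$ produces a convex set $\tilde{\mathcal{D}}_1$, invoking a sign condition $D_nu_\infty\geq 0$. But $\mathcal{D}_1=\mathcal{C}_1\cup B^-_{1/C}$, and $\mathcal{C}_1$ is the rescaled image $\frac{1}{\sqrt h}Dv(S_{v_\infty,h}\cap V)$ — it is pinched between $B_{1/C}\cap U$ and $B_{C}\cap U$ by \eqref{shape5}, but it is not a sub-level set and there is no reason for it (or its reflected union) to be convex. The analogue of the paper's reasoning for $\tilde{\mathcal{D}}_2$ (which works because $\mathcal{D}_2=\rho S^*_1$ is literally a sub-level set of $v_\infty$ and $D_nv_\infty\leq 0$) simply does not transfer to $\mathcal{D}_1$. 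Fortunately this step is superfluous: you do not need Caffarelli's interior estimate for $\widetilde{w^*}$ directly. Since $\tilde w\in C^3$ near $0$ with $D^2\tilde w(0)=I>0$ (thanks to the convexity of $\tilde{\mathcal{D}}_2$), the gradient $D\tilde w$ is a $C^2$ diffeomorphism near $0$, so its Legendre transform $\tilde w^*$ is automatically $C^3$ near $0$ with $D^2\tilde w^*(0)=(D^2\tilde w(0))^{-1}=I$; this is exactly the Hessian-inverse observation you make at the end of that paragraph, and it alone delivers $\tilde w^*(y)=\tfrac12|y|^2+O(|y|^3)$. Together with the dual version of Lemma \ref{newC0} (which follows, as you say, from the same compactness argument, or simply from the stability of Legendre transforms under $L^\infty$ perturbation plus the sub-level-set comparability \eqref{shape3}--\eqref{shape4}), this gives \eqref{taylor3}. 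So: drop the claim about convexity of $\tilde{\mathcal{D}}_1$; the rest of your argument is correct and is, in substance, the paper's ``by duality.''
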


\begin{proof}
From \eqref{taylor1} one can see that
$$ \{\tilde w < h\} \approx B_{\sqrt{h}}\quad\mbox{ and }\quad D\tilde w\left(\{\tilde w<h\}\right)\approx B_{\sqrt{h}}. $$
Then by Lemma \ref{newC0} and the fact $\tilde w=w$ in $\mathcal{C}_1$, we have
$$ \|u-\frac{1}{2}|x|^2\|_{L^{\infty}(B_{\epsilon_0}\cap U)}\leq \delta_2 + C \epsilon_0^3, $$
where $\delta_2\to0$ as $d\to\infty$. 
Therefore, by taking $\epsilon_0$ sufficiently small and $d$ sufficiently large we can obtain \eqref{taylor2}.
Since $v$ is the Legendre transform of $u$, by duality we can also obtain \eqref{taylor3}.
\end{proof}

\subsection{Proof of Theorem \ref{thm2}}\label{s44}

\subsubsection{$C^{1,\beta}$ regularity}
For any given $\beta\in(0,1)$, we first prove that $v\in C^{1,\beta}$ near the origin. 
Let $\delta_0, \eta_0$ be the constants in conditions of Lemma \ref{le111}.
Thanks to Lemma \ref{localC0}, there exists a constant $\epsilon_0$ such that \eqref{taylor3} holds. 

Let $h_0=\epsilon_0^2$.
Note that up to an affine transformation $A$ with $\|A\|, \|A^{-1}\|\leq Ch_0^{-\epsilon}$, the sub-level set $S_{v_\infty, h_0}\approx B_{\epsilon_0}\cap V$. Hence, setting $h=h_0$ in \S4.3.1, and similarly to \eqref{dilate1}--\eqref{dilate2} making the rescaling
\beq\label{dilate11}
	v(y)\rightarrow \frac{1}{h_0}u(\epsilon_0y)
\eeq
and
\beq\label{dilate22}
	U\rightarrow \frac{1}{\epsilon_0}U,\ V\rightarrow \frac{1}{\epsilon_0}V,
\eeq
by Lemma \ref{localC0} and the argument as in \S4.3.1, we see that the conditions \eqref{401}--\eqref{eq:uP} are all satisfied. 
Therefore, by Lemma \ref{le111} we obtain that $v\in C^{1,\beta}$ near the origin. 

In fact, by the transformation $A$ and rescaling \eqref{dilate11}--\eqref{dilate22}, we again have the same setting as the initial setting \eqref{shape5} and \eqref{shape006}, so that we can apply the argument in \S4.3.2. Then by the iteration, noting that $\|A\|, \|A^{-1}\|\leq Ch_0^{-\epsilon}$ we can obtain
$$ B_{\left(\frac{\sqrt{h_0}}{Ch_0^{-\epsilon}}\right)^k}\cap V \subset S_{h_0^k}[v] \subset B_{\left(Ch_0^{-\epsilon}\sqrt{h_0}\right)^k}\cap V  $$
for any $k\geq1$. 
Let 
$$r_0:=h_0^{\frac12+\epsilon}/C, $$
then, since $\epsilon$ is as small as we want due to Lemma \ref{limsm}, we have for any given $\beta\in(0,1)$
$$ \|v\|_{L^\infty(B_{r_0^k}\cap V)} \leq h_0^k = (Ch_0^{-\epsilon}r_0)^{2k} \leq r_0^{(1+\beta)k}, $$
provided $h_0$ (and so $r_0$) is sufficiently small. 
This implies that $C^{1,\beta}$ regularity of $v$ near the origin. 

By rescaling back to the original solution and the arbitrariness of the chosen $y_0=0$  on the free boundary $\p V\cap\Lm$, we obtain that $v$ is $C^{1,\beta}$ up to the free boundary $\p V\cap\Lm$. 
Finally by recalling that the inner normal of the free boundary at $y\in\p V\cap\Lm$ is given by $Dv(y)-y$, we can conclude that the proof of Part $i)$ of Theorem \ref{thm2} by exchanging $u$ and $v$.
\qed

\subsubsection{$C^{2,\alpha}$ regularity}
When the densities $f, g$ are $C^\alpha$, we can use the argument of \cite[\S5]{CLW1} to obtain the $C^{2,\alpha}$ regularity of potentials up to the free boundaries and thus the $C^{2,\alpha}$ regularity of the free boundaries for the same exponent $\alpha\in(0,1)$.

The crucial ingredient is a finer local estimate in Lemma \ref{localC0}, that is for any $\epsilon_0>0$,
 \begin{eqnarray}
&&\|u-\frac{1}{2}|x|^2\|_{L^{\infty}(B_{\epsilon_0}\cap U)}\leq \epsilon_0^{2+\alpha}, \label{taylor222} \\
&&\|v-\frac{1}{2}|y|^2\|_{L^{\infty}(B_{\epsilon_0}\cap V)}\leq \epsilon_0^{2+\alpha}. \label{taylor333}
 \end{eqnarray}
 Let $h_0=\epsilon_0^2$. 
Since $v\in C^{1,\beta}$ near the origin, one has $S_{h_0,v}\approx B_{\epsilon_0}\cap V$ and moreover
$$ D_{h_0}^- := S_{h_0,v}\cap\{y^n\leq -h_0^{1-3\tau}\} \Subset V, $$
for any small $\tau>0$. 
Let $D_{h_0}^+$ be the reflection of $D_{h_0}^-$ with respect to the hyperplane $\{y^n=-h^{1-3\tau}\}$, and denote $D_{h_0}=D_{h_0}^+\cup D_{h_0}^-$. 
 In order to prove \eqref{taylor333}, we compare $v$ with the solution $w$ solving 
 \begin{equation}\label{approDiri}
 \left\{\begin{array}{ll}
 \det(D^2w)=1 \quad & \mbox{in } D_{h_0}, \\
 w=h_0\quad &\mbox{on }\p D_{h_0}.
 \end{array}
 \right.
 \end{equation}
By a similar argument as that of Theorem 1.1 (i) in \cite[\S5]{CLW1}, we can obtain that
$$ \|v-w\|_{L^\infty(D_{h_0}\cap U)} \leq Ch_0^{1+\frac{\alpha}{2}}. $$
By an affine transformation and rescaling back, we then have \eqref{taylor333}.

Since \eqref{taylor333} holds for any small $\epsilon_0$ (thus for any small $h_0$), the perturbation argument in \cite[\S5]{CLW1} applies. 
Here, we outline the main steps as follows. 
Let $D_k=D_{h_k}$, where $h_k=4^{-k}$, $k=0,1,2,\cdots$. 
Let $v_k$ be the convex solution of
 \begin{equation}\label{Diri1}
 \left\{\begin{array}{ll}
 \det(D^2v_k)=1 \quad & \mbox{in } D_k, \\
 v_k=h_k\quad &\mbox{on }\p D_k.
 \end{array}
 \right.
 \end{equation}
Then from \eqref{taylor333} and Schauder estimate (see  \cite[Lemma 5.4]{CLW1}), we have
$$ |D^2v_i(y)-D^2v_{i+1}(y)| \leq Ch_i^{\alpha/2},\quad y\in D_{i+2}. $$
Summing it over $k$, we have
\begin{equation}\label{C11}
\begin{split} 
|D^2v(0)| &\leq |D^2v_0(0)| + \sum_{i=0}^\infty |D^2v_i(0) - D^2v_{i+1}(0)| \\
	&\leq C+\sum_{i=0}^\infty Ch_i^{\alpha/2} \leq C_1,  
\end{split}
\end{equation}
for a universal constant $C_1$. This implies $v$ is $C^{1,1}$ up to the free boundary $\p V\cap\Lm$. 
Once having the estimate \eqref{C11}, heuristically the Monge-Amp\`ere equation 
$$ \det\, D^2v = \frac{g}{f(Dv)} $$
becomes uniformly elliptic. 
One can actually follow the argument in \cite[\S5]{CLW1} to obtain that if $f, g\in C^\alpha$, the solution $v$ is $C^{2,\alpha}$ up to the free boundary $\p V\cap\Lm$, which implies the $C^{2,\alpha}$ regularity of the free boundary $\p V\cap\Lm$ as well. 
Therefore, we conclude the proof of Part $ii)$ of Theorem \ref{thm2} by exchanging $u$ and $v$.
The proof of Part $iii)$ follows from the standard theory of elliptic equations \cite{GT}. 
\qed

%\newpage

\section{Application on another model}\label{S5}

We consider an optimal transport problem from the source domain $U$ associated with density $f$ to 
the target $V=V_1\cup V_2$ associated with density $g$, where $V_1, V_2$ are two domains separated by a hyperplane $\mathcal{H}$, and the densities satisfy 
$$ \int_U f(x)\,dx = \int_V g(y)\,dy $$
and $1/\lambda<f, g<\lambda$ for some positive constant $\lambda$.
Denote by $u$ (resp. $v$) the convex function solving $(\partial u)_\# f\chi_U=g\chi_V$ (resp. $(\partial v)_\# g\chi_V=f\chi_U$).

\begin{theorem}
The interior of $U_1:=\partial v(V_1)$ and $U_2:=\partial v(V_2)$ are disjoint and separated by a Lipschitz hypersurface.
\end{theorem}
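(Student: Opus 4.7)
I would realise $u$ as the pointwise maximum of two convex functions---one for each target component---and show that the locus where they coincide is a Lipschitz graph over the separating hyperplane $\mathcal{H}$.

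Choose coordinates so that $\mathcal{H}=\{y\cdot e=c\}$; by the strict separation hypothesis (and compactness of the targets), $\overline{V_1}\subset\{y\cdot e\leq c-\delta\}$ and $\overline{V_2}\subset\{y\cdot e\geq c+\delta\}$ for some $\delta>0$. Define the partial Legendre transforms
\[
u_i(x):=\sup_{y\in\overline{V_i}}\bigl(x\cdot y-v(y)\bigr),\qquad i=1,2.
\]
Each $u_i$ is convex with $\partial u_i(x)\subset\overline{\mathrm{conv}}(V_i)$; in particular, each is globally Lipschitz with constant $L\leq\diam V$. Since $V=V_1\cup V_2$, the full Legendre transform satisfies $u=\max\{u_1,u_2\}$.

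\emph{Disjoint interiors.} If $x\in U_1\cap U_2$, then $\partial u(x)$ contains distinct points $y_1\in V_1$ and $y_2\in V_2$, so $u$ cannot be differentiable at $x$. Since a convex function is differentiable almost everywhere, $|U_1\cap U_2|=0$, and consequently $\mathrm{int}(U_1)\cap \mathrm{int}(U_2)\subset \mathrm{int}(U_1\cap U_2)=\emptyset$.

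\emph{Lipschitz separating surface.} The containment $\partial u_i(x)\subset\overline{\mathrm{conv}}(V_i)$ yields the one-sided derivative bounds $\partial_e u_1\leq c-\delta$ and $\partial_e u_2\geq c+\delta$, so along every line $\ell_{x'}:=\{x'+te:t\in\R\}$ with $x'\in e^\perp$ the map $w(x'+te):=u_2(x'+te)-u_1(x'+te)$ is strictly increasing with slope at least $2\delta$. Hence $\ell_{x'}$ meets $\{u_1=u_2\}$ in at most one point; where it does, let $\tau(x')$ denote its $e$-coordinate. Using the slope bound together with the $2L$-Lipschitz continuity of $w$, for any two such $x'_1,x'_2$,
\[
2\delta\,|\tau(x'_1)-\tau(x'_2)|\leq \bigl|w(x'_1+\tau(x'_2)e)-w(x'_2+\tau(x'_2)e)\bigr|\leq 2L\,|x'_1-x'_2|,
\]
so $\tau$ is $(L/\delta)$-Lipschitz on the open subset of $e^\perp$ over which $\ell_{x'}$ crosses from $\{u_1>u_2\}\supset\mathrm{int}(U_1)$ into $\{u_2>u_1\}\supset \mathrm{int}(U_2)$. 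Its graph is the desired Lipschitz hypersurface.

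The central quantitative input is the strict derivative gap $\partial_e(u_2-u_1)\geq 2\delta$, which depends on the positive separation $\delta>0$ of the closures of the two targets; this is the main place where the hypothesis is used, and without such a gap $\{u_1=u_2\}$ could in principle thicken into a slab. In that degenerate case one would have to supplement the argument with a strict convexity estimate for $u$ (along the lines of \S\ref{S3}) to rule out flat pieces of $\{u_1=u_2\}$, but for genuinely disjoint targets the gap is automatic.
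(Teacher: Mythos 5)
Your argument is correct, and while it lands in the same place as the paper's, it gets there by a genuinely different route. The paper parametrises the gap between the two targets by the angular aperture $\alpha$ of the admissible directions $\mathcal{D}=\{(y_2-y_1)/|y_2-y_1| : y_i\in V_i\}$, then shows by monotonicity of $\partial u$ that the ``downward'' cone $\mathcal{C}_x=\{x+z : z\cdot(-e_n)\geq\sqrt{1-\alpha^2}\}$ emanating from any $x\in U_1$ stays inside $U_1$; taking the supremum of the Lipschitz cone-graphs $f_x$ over $x\in U_1$ produces the separating hypersurface. You instead split $u$ as the pointwise maximum of the two partial Legendre transforms $u_1, u_2$ and exploit the one-sided slope gap $\partial_e u_2-\partial_e u_1\geq 2\delta$ to show that each $e$-line crosses $\{u_1=u_2\}$ at most once, then read off the Lipschitz constant of the graph from the $\delta$-slope gap and the horizontal Lipschitz bound $\diam V$ on $u_2-u_1$. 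The cone condition and the slope gap encode essentially the same quantitative separation, but your presentation avoids the explicit cone geometry and gives the disjointness of interiors for free via a.e.\ differentiability of $u$, whereas the paper obtains it as a corollary of the graph structure. Two small points to tidy: what bounds the Lipschitz constant of $u_i$ is $\sup_{y\in V_i}|y|$ rather than $\diam V$, though the horizontal Lipschitz constant of $w=u_2-u_1$ is indeed controlled by $\diam V$, which is what the estimate actually uses; and, just as in the paper's step ``From the assumption, there exists $\alpha>0$'', your $\delta>0$ requires the closures $\overline{V_1},\overline{V_2}$ to be strictly separated, which is the intended reading of the hypothesis.
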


\begin{proof}
Without loss of generality, we may assume that 
$$\mathcal{H}=\{y^n=0\},\quad V_1\subset \{y^n<0\}, \quad \mbox{and} \quad V_2\subset\{y^n>0\}.$$
Let
$$ \mathcal{D}:=\left\{\frac{y_2-y_1}{|y_2-y_1|} :  y_1\in V_1, y_2\in V_2\right\}. $$
From the assumption, there exists a small constant $\alpha>0$ such that
$$ \mathcal{D}\subset \left\{e\in\mathbb{S}^{n-1} : e\cdot e_n>\alpha \right\},$$
namely $\mathcal{D}$ is compactly included in the open upper hemisphere.
Define the cone
\beq\label{uniLip}
 	\mathcal{C}:=\left\{ e\in\mathbb{S}^{n-1} : e\cdot (-e_n)\geq \sqrt{1-\alpha^2} \right\}. 
\eeq
A straightforward computation shows that $z_1\cdot z_2<0$ for any $z_1\in \mathcal{D}$ and $z_2\in \mathcal{C}.$

Fix any $x\in U_1,$ by definition there exists some $y_1\in V_1$ such that $x\in\partial v(y_1).$ 
Hence, $y_1\in \partial u(x).$
 Denote 
 $\mathcal{C}_x:=\{x+z : z\in \mathcal{C}\}.$
Then, for any $\tilde{x}\in \mathcal{C}_x\cap U,$ we have 
 $$(\tilde{x}-x)\cdot (y_2-y_1)<0,\qquad \forall\, y_2\in V_2.$$ 
 On the other hand, by monotonicity of convex function, we have 
 $$(\tilde{x}-x)\cdot (\tilde y-y_1)\geq 0$$ for any $\tilde y\in \partial u(\tilde{x}).$
 Hence, 
 $\partial u(\mathcal{C}_x\cap U)\cap V_2=\emptyset,$ which implies that $\mathcal{C}_x\cap U\subset U_1.$ Therefore, we get a characterisation of $U_1$ as follows:
 $$U_1=\bigcup_{x\in U_1}\mathcal{C}_x\cap U.$$
 Denote by $f_x$ the Lipschitz function over $\{x^n=0\}$ with graph $\partial \mathcal{C}_x.$
 Let $$f:=\sup_{x\in U_1}f_x.$$ 
From \eqref{uniLip}, the function $f_x$ has a uniform Lipschitz bound, and thus $f$ is also a Lipschitz function.
Moreover, we have
 $$U_1=\left\{x^n\leq f(x^1,\cdots, x^{n-1})\right\}\cap U.$$
\end{proof}

\begin{theorem}\label{t2}
Suppose that $U$, $V_1$ and $V_2$ are {convex}, then the free boundary $\mathcal{F}:=\partial U_1\cap U$ is $C^1.$
\end{theorem}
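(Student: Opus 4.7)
The plan is to adapt the argument of Lemma \ref{C1lem} and Corollary \ref{coro1} to the present two-target setting. At each $x_0\in\mathcal{F}$, the single ``gradient limit'' of Lemma \ref{C1lem} is replaced by a \emph{pair} of one-sided limits $p_1(x_0)\in\overline{V_1}$ and $p_2(x_0)\in\overline{V_2}$, and the jump vector $p_2(x_0)-p_1(x_0)$ will be shown to furnish a continuous normal direction along $\mathcal{F}$.

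First, I would fix $x_0\in\mathcal{F}$ and, following the template of Claims 1--3 of Lemma \ref{C1lem}, prove that for any sequence $x_k\in U_i$ with $x_k\to x_0$, the gradients $Du(x_k)$ converge to a unique point $p_i(x_0)\in\overline{V_i}\cap\partial u(x_0)$. Two distinct accumulation points $p_1\ne p_1'\in\overline{V_1}\cap\partial u(x_0)$ would force $v=u^*$ to be affine on $\overline{p_1p_1'}$; Caffarelli's interior regularity \cite{C91,C92}, applicable here because the target $U$ is convex, then rules out the segment meeting $V_1^\circ$. The remaining case, in which the segment lies entirely on $\partial V_1$, is excluded by an extreme-point and doubling-property argument modeled on Claim~3 of Lemma \ref{C1lem}; the role there played by the partial-transport interior ball lemma is here played by the hyperplane separation of $V_1$ from $V_2$ together with the convexity of $V_1$ and the fact that $x_0$ is an interior point of $U$. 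Uniqueness of $p_2(x_0)\in\overline{V_2}$ is symmetric.

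Once uniqueness is established, the maps $x_0\mapsto p_i(x_0)$ are continuous on $\mathcal{F}$ by compactness: any accumulation point of $\{p_i(x_k)\}$ as $x_k\to x_0$ must lie in the singleton $\overline{V_i}\cap\partial u(x_0)$. The monotonicity of $\partial u$, combined with the inclusion $\overline{p_1(x_0)p_2(x_0)}\subset\partial u(x_0)$, shows that for $\tilde x\in U$ close to $x_0$ one has $\tilde x\in U_1$ if and only if $(\tilde x-x_0)\cdot(p_2(x_0)-p_1(x_0))<0$ to first order, so the unit inner normal to $\mathcal{F}$ at $x_0$ is
\begin{equation*}
\nu(x_0)=\frac{p_2(x_0)-p_1(x_0)}{|p_2(x_0)-p_1(x_0)|}.
\end{equation*}
The cone estimate \eqref{uniLip} forces $(p_2(x_0)-p_1(x_0))\cdot e_n>0$ uniformly on compact pieces of $\mathcal{F}$, so $|p_2-p_1|$ is bounded below and $\nu$ is a continuous unit vector field on $\mathcal{F}$. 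Combined with the already known Lipschitz graph structure of $\mathcal{F}$, the implicit function theorem yields $\mathcal{F}\in C^1$.

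The main obstacle is the uniqueness step above, specifically excluding two distinct accumulation points lying entirely on $\partial V_1$ (or $\partial V_2$), where Caffarelli's interior strict convexity of $v$ alone is insufficient. As in Claim~3 of Lemma \ref{C1lem}, one must carefully exploit the convexity of $V_i$, the hyperplane separation between $V_1$ and $V_2$, and the interior position of $x_0\in U$ to produce a contradiction via an exposed/extreme-point and doubling argument.
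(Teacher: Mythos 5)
Your proposal takes essentially the same route as the paper's proof: reduce the $C^1$-regularity of $\mathcal{F}$ to uniqueness of the one-sided gradient limits $p_i(x_0)$ (equivalently, $C^1$-regularity of $u_i$ up to $\mathcal{F}$), derive a contradiction from a segment in $\partial u_i(x_0)$ lying on $\partial V_i$ via the exposed-point and centred-section argument of Lemma~\ref{C1lem}, and conclude with the implicit function theorem using $Du_1\ne Du_2$ on $\mathcal{F}$. The only remark worth making is that the analogue of Claim~3 is in fact easier here than you suggest: since $V_1$ is a full convex target component with no free boundary of its own, the Monge--Amp\`ere measure near $q_0\in\partial V_1$ is automatically comparable to $\chi_{V_1}$ with $V_1$ convex, so the doubling property follows without any substitute for the interior-ball lemma.
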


\begin{proof}
Let $u$ be the potential function of the optimal transport problem from $U$ to $V$. 
Denote by $u_i$ the restriction of $u$ on $U_i$, $i=1, 2$. Note that $u_1=u_2$ on $\mathcal{F}$.
Similarly to Corollary \ref{coro1}, in order to prove $\mathcal{F}\in C^1$, it suffices to prove that $u_i$ is $C^1$ up to the free boundary $\mathcal{F}$ in the sense of Lemma \ref{C1lem}. 

The proof is similar to that of Lemma \ref{C1lem}. For completeness, we outline some key steps as follows. 
Suppose to the contrary that at some point $x_0\in \mathcal{F}$, $\partial \tilde{u}_i(x_0)$ contains two points $y_1, y_2\in V_i$. 
Without loss of generality, we may assume that $x_0$ is the origin. 
By duality, $0\in\p v_i(y_1)\cap\p v_i(y_2)$.
Since $v_i$ is strictly convex inside $V_i$, one has $y_1, y_2\in\p V_i$. 
Subtracting a constant to $v_i$, we may assume that $v_i\geq0=\min v_i$ and the segment 
$$ \overline{y_1 y_2} \subset \{v_i=0\} =: \mathcal{C}_0.$$
The contact set $\mathcal{C}_0$ satisfies three Claims in the proof of Lemma \ref{C1lem}.

Let $q_0\in\p V_i$ be an exposed point of $\mathcal{C}_0$ and $y_3:=\frac{1}{2}(y_1+y_2)$.
Denote $q_\delta:=(1-\delta)q_0+\delta y_3$. 
Consider the centred section $S_\varepsilon^c(q_\delta)$ of $v_i$ at $q_\delta$ with height $\varepsilon>0$ small. 
Heuristically, by a normalisation one has the observation that
$$ \frac{\dist(q_0, \p S_\varepsilon^c(q_\delta))}{\diam(S_\varepsilon^c(q_\delta))} \to 0,\quad \mbox{ as } \varepsilon\to0,$$
which implies $v_i(q_0)\to0$ as $\varepsilon\to0$. 
However, on the other hand, $v_i(q_0)$ is close to the minimum of $v_i$, thus $|v_i(q_0)|\approx 1$. 
This contradiction implies that $u_i$ is $C^1$ up to the free boundary $\mathcal{F}$. 

Since $V_1$ and $V_2$ are disjoint, we have $D\tilde{u}_1(x)\ne D\tilde{u}_2(x)$ for any $x\in \mathcal{F}$.
Therefore, by implicit function theorem we obtain that 
 the free boundary $\mathcal{F}$ is $C^1$.
\end{proof}

\begin{remark}
\emph{
Note that in the proof of Theorem \ref{t2}, we also see that the unit normal of $\mathcal{F}$ at $x$ is given by $\frac{Du_1(x)-Du_2(x)}{|Du_1(x)-Du_2(x)|}.$
Hence, higher regularity of $u_i$ will automatically imply higher regularity of $\mathcal{F}$.
}
\end{remark}

Let $v$ be the potential function of the optimal transport problem from $(V, g)$ to $(U, f).$ We extend $v$ to $\mathbb{R}^n$ as follows
$$v(x):=\sup\{L(x): L\ \text{is linear},\ L|_{V}\leq v,\ DL\in U\}.$$
The following localisation lemma is a key ingredient of obtaining the strict convexity of $v$, which in turn will implies the regularity of $u$.  
\begin{lemma}\label{2loc}
Suppose $U, V_1$ and $V_2$ are {convex}. 
Let $x\in\mathcal{F}\cap U$ and $y=Du_1(x)\in\p V_1$.
Let $R>0$ be a constant such that $B_R(y)$ contains no preimages $(Dv)^{-1}(\mathcal{F}\cap \partial U)$.
There exists a universal constant $h_0>0$ such that if $h<h_0$,
$$S^c_{v, h}(y)\subset B_R(y).$$ 
\end{lemma}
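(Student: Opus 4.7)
I will adapt the proof of Lemma \ref{loca91} to this setting, with the roles of source and target exchanged; the key geometric input is the $C^1$ smoothness of the free boundary $\mathcal{F}$ at the preimage $x=Dv(y)$, supplied by Theorem \ref{t2}.

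First, translate so that $y=0$ and subtract from $v$ the affine function $\eta\mapsto v(0)+\eta\cdot x_0$ with $x_0:=Dv(0) \in \partial v(0)$. After this normalisation one has $v\geq 0$, $v(0)=0$, $0 \in \partial v(0)$, and (in the dual $x$-coordinates) $Du_1(0)=0$. For any $z \in S^c_{v,h}(0)$, the standard balanced-section estimate gives $v(z)\leq C_1 h$ for a universal constant $C_1$.

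The hypothesis on $B_R(0)$ places the point $0 \in \mathcal{F}$ at a positive distance from $\partial U$, so by Theorem \ref{t2} the hypersurface $\mathcal{F}$ is $C^1$ at $0$. Let $e_\nu \in \mathbb{S}^{n-1}$ be the inner unit normal to $\mathcal{F}$ at $0$ pointing into $U_1$, and set $\hat x := \hat r\,e_\nu$ for a small parameter $\hat r>0$ to be chosen. By the $C^1$ character of $\mathcal{F}$ one may ensure $B_{\hat r}(\hat x) \subset U_1$. For any direction $e' \in \mathbb{S}^{n-1}$ with $e' \cdot e_\nu \geq 0$, put $x' := \hat x + \tfrac{1}{2}\hat r\, e' \in U_1$, so $e' \cdot x' \geq \hat r/2$ and $|x'| \leq 3\hat r/2$, and take $y' := Du_1(x') \in V_1$. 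The $C^1$ continuity of $Du_1$ up to $\mathcal{F}$ together with $Du_1(0)=0$ yields a modulus $\omega$ with $\omega(r)\to 0$ as $r\to 0$ and $|y'|\leq \omega(|x'|) \leq \omega(2\hat r)$. Supporting $v$ by its affine plane at $y'$ and using $v(y')\geq 0$,
\[
v(te') \;\geq\; v(y') + x' \cdot (te'-y') \;\geq\; \tfrac{1}{2}\hat r\,t \;-\; \tfrac{3}{2}\hat r\,\omega(2\hat r),
\]
which exceeds $C_1 h$ as soon as $t > T := 2 C_1 h/\hat r + 3\,\omega(2\hat r)$. Thus $te' \notin S^c_{v,h}(0)$ for all such $t$ and every $e'$ in the half-sphere $\{e'\cdot e_\nu \geq 0\}$.

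Finally, since $S^c_{v,h}(0)$ is convex with centre of mass at $0$, the classical inclusion $-K \subset nK$ for centroid-at-origin convex bodies gives a dimensional constant $C_n$ with $d_+(e) \leq C_n\, d_-(e)$ for the radial distances from $0$ to $\partial S^c_{v,h}(0)$ along $\pm e$. The half-sphere bound therefore upgrades to $S^c_{v,h}(0) \subset B_{C_n T}(0)$. Choosing $\hat r$ small enough that $3 C_n\,\omega(2\hat r) < R/2$, and then $h_0 := \hat r R/(4 C_n C_1)$, one has $C_n T < R$ for all $h<h_0$, hence $S^c_{v,h}(0) \subset B_R(0)$. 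The main technical obstacle is the two-piece structure of the target: the test points $x'$ must lie in the correct component $U_1$ so that $y' \in V_1$ is forced to be close to $0$ (otherwise, across $\mathcal{F}$ one only has $|y'|$ bounded by $\mathrm{diam}(V)$, which ruins the estimate). This is handled by aligning $\hat x$ with the inner normal to $\mathcal{F}$ at $0$ and exploiting the $C^1$ regularity of $\mathcal{F}$ from Theorem \ref{t2}.
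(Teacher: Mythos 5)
Your argument is essentially correct, but it follows a genuinely different route from the paper. The paper proves Lemma~\ref{2loc} by the same soft compactness/contradiction device as Lemma~\ref{1loc}: assuming $S^c_{v,h_k}(0)\not\subset B_R(0)$ along a sequence $h_k\to 0$, the sections converge to a balanced convex set $Z\subset\{v=0\}$ of positive diameter, so $\{v=0\}$ is a nontrivial convex set whose extreme points lie in $\partial V$ (the Monge--Amp\`ere measure of $v$ vanishes outside $V$ and $v$ is strictly convex inside $V$); since $0\in\partial V_1$ lies strictly in the open half-space containing $V_1$ and $0$ is not extreme, there must be an extreme point on $\partial V_1$, which contradicts the $C^1$ regularity of $u_1$ from Theorem~\ref{t2}. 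You instead transplant the \emph{quantitative} argument of Lemma~\ref{loca91}: place a test point $\hat x=\hat r\,e_\nu\in U_1$, push it forward by $Du_1$, exploit the modulus of continuity of $Du_1$ (again from Theorem~\ref{t2}) to force $v$ to grow linearly along a half-sphere of directions, and then upgrade to all directions using the balanced centre-of-mass inclusion $-K\subset nK$. This is a perfectly legitimate alternative; it yields an explicit $h_0$ in terms of the modulus of $Du_1$, whereas the paper's argument is nonconstructive. Two small precision points worth fixing: (a) mere $C^1$ regularity of $\mathcal{F}$ does not literally give $B_{\hat r}(\hat r\,e_\nu)\subset U_1$; what one gets is $\mathrm{dist}(\hat r\,e_\nu,\mathcal{F})\geq \hat r(1-o(1))$ as $\hat r\to 0$, so one should use, say, $B_{\hat r/2}(\hat x)$ and take $x'=\hat x+\tfrac14\hat r\,e'$ (adjusting the ensuing constants); (b) the phrase ``the hypothesis on $B_R(0)$ places the point $0\in\mathcal F$ at a positive distance from $\partial U$'' is a non sequitur as stated (the hypothesis is in $y$-space), but the condition is what secures a uniform modulus for $Du_1$ and $\mathcal F$ over a compact piece of the free boundary, which is what you actually need for $\hat r$, and hence $h_0$, to be chosen universally.
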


\begin{remark}
\emph{
Indeed, we can prove the above lemma in a larger set, namely, away from the preimages of tangential intersections of $\mathcal{F}$ and $\p U$. 
For simplicity, here we only state it for the preimages of the free boundary lying in the interior of $U$. 
Note that Lemma \ref{2loc} also holds replacing $V_1$ by $V_2.$
}
\end{remark}

\begin{proof} 
The proof is similar to that of Lemma \ref{1loc}, we include it here for reader's convenience.
Suppose to the contrary that there exists a sequence $h_k\rightarrow 0$ such that  $S^c_{v, h_k}(y)\not\subset B_R(y).$
By subtracting a linear function and translating the coordinates, we can assume $y=0$, $v\geq 0$ and $v(0)=0$. 
Then, $S^c_{v, h_k}(0)$ locally converges in Hausdorff distance to a convex set $Z$ balanced around the origin.
It is easy to see that $Z\subset \{v=0\}$, and $Z\not\subset B_R(0)$.
Hence, $\{v=0\}$ is a convex set containing more than one point, in particular, $\{v=0\}$ contains a segment balanced around $0$. 
 
Since the Monge-Amp\`ere measure $\det\,D^2v$ vanishes outside $V$, the set of extreme points $ext(\{v=0\})$ must be contained in $\overline{V}.$ On the other hand, since $v$ is strictly convex in the interior of $V$, we have $ext(\{v=0\})\subset \partial V$. 
Note that $0\notin ext(\{v=0\})$.
Since $V_1$ and $V_2$ are separated by a hyperplane, there must exists an extreme point $\hat y\in ext(\{v=0\})\cap\p V_1$.  
However, this contradicts with the $C^1$ regularity of $u_1$ in Theorem \ref{t2}.
\end{proof}

Once having the above localisation lemma, we can adopt the argument as in \cite{C92,CM} to obtain boundary $C^{1,\alpha}$ regularity. 
Similarly to \S\ref{S3}, we can establish the interior $C^{1,\alpha}$ regularity of the free boundary $\mathcal{F}$.

\vskip10pt
Now, observe that when $d:=\dist(V_1, V_2)$ is sufficiently large, $\frac{Du_2(x)-Du_1(x)}{|Du_2(x)-Du_1(x)|}$ is uniformly close to the unit vector $e_n$, for all $x \in \mathcal{F}$.
Hence, the free boundary is close to a hyperplane (as close as we want, provided $d$ is large enough). Then, we can follow our argument in \S\ref{S4} to establish the following theorem.

\begin{theorem}\label{maint1}
Let $U, V_1, V_2, \mathcal{F}$ be as above. Assume that $\p U, \p V_i \in C^{1,1}$ are convex.  

\noindent a) When $f, g\in C^0$, for any $\beta\in(0,1)$, there exists a constant $d_\beta>0$ such that if $d>d_\beta$, the free boundary
 $\mathcal{F}$ is $C^{1,\beta}$.  

\noindent b) When $f, g\in C^{\alpha}$ for some $\alpha\in(0,1)$, there exists a constant $d_\alpha>0$ such that if $d>d_\alpha$,
the free boundary $\mathcal{F}$ is $C^{2,\alpha}$.

\noindent c) When $U, V, f, g$ are smooth, the free boundary $\mathcal{F}$ is $C^\infty$ in the interior of $U$, provided $d$ is sufficiently large. 
\end{theorem}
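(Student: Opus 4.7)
The plan is to transplant the far-apart strategy developed for Theorem \ref{thm2} in \S\ref{S4} to this two-target setting. The key observation, already noted after Theorem \ref{t2}, is that when $d=\dist(V_1,V_2)$ is large, the unit vector $\frac{Du_2(x)-Du_1(x)}{|Du_2(x)-Du_1(x)|}$ is uniformly close to a fixed direction, which we normalise to be $e_n$. Consequently the free boundary $\mathcal{F}$, whose normal at $x$ is exactly this vector, is $C^{1,\alpha}$-close to a hyperplane $\{x^n = c\}$, with constants independent of $d$ (by the uniform $C^{1,\alpha}$ estimate analogous to Remark \ref{uniC1alpha}, which in turn uses Lemma \ref{2loc} to localise centred sections of $v$ near a point of $\p V_i$). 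After translating so that $\mathcal{F}$ passes through the origin and sliding the target coordinates, the picture is exactly parallel to \eqref{closeto1}--\eqref{gq2}, but now it is $U$ that is cut into two pieces $U_1, U_2$ by a nearly flat hypersurface, while $V_1$ and $V_2$ are pushed far apart in the $\pm e_n$ directions.

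Next I would construct the limiting problem. Let $U_{1,\infty} := U\cap\{x^n < 0\}$, $U_{2,\infty} := U\cap\{x^n > 0\}$, and let $u_\infty$ be the convex potential solving the two-target optimal transport from $(U,f)$ to $(V_1 \cup V_2, g)$ in the limit (suitably normalised so that $U_{i,\infty}$ is mapped to $V_i$). Restricted to each piece $U_{i,\infty}$, the function $u_{i,\infty}$ is the potential of a standard optimal transport from $U_{i,\infty}$ to $V_i$ whose source domain has a flat piece of boundary along $\{x^n = 0\}$. I would then prove the analogues of Lemma \ref{obliqueness} and the uniform density estimate \eqref{unid} for $u_{i,\infty}$ on this flat boundary: obliqueness follows because the gradient $Du_{i,\infty}$ on $\{x^n=0\}$ lands in $\overline{V_i}$, which is a positive distance from the free-boundary hyperplane in the target, so the inner normal of $\p U_{i,\infty}$ cannot be parallel to $\{x^n=0\}$; the uniform density is automatic since $\p U_{i,\infty}\cap\{x^n=0\}$ is flat. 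Together with our boundary-regularity results from \cite{CLW1}, this gives $u_{i,\infty}\in C^{1,\beta}$ (resp. $C^{2,\alpha}$) up to the flat piece, exactly as in Lemma \ref{limsm}, and hence sub-level sets of the dual $v_{i,\infty}$ satisfy the normalisation bounds of Lemma \ref{loc11}.

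Now one runs the perturbation loop of \S\ref{s42}--\S\ref{s43}. A standard compactness argument produces $\|u-u_\infty\|_{L^\infty}\le\omega(d)$ with $\omega(d)\to0$, which combined with the uniform $C^{1,\alpha}$ control yields the analogue of Lemma \ref{clolem}, and thus the shape estimates \eqref{shape5}--\eqref{shape006} for the images of sub-level sets under $Dv$ after the appropriate affine normalisation and dilation $x\mapsto x/\sqrt h$. The symmetrisation trick of \S\ref{appro} (reflecting across the flat boundary and solving a single Monge--Amp\`ere problem on the symmetrised convex target) carries over verbatim, producing the quadratic closeness \eqref{taylor2}--\eqref{taylor3} at a single scale. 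One then invokes Lemma \ref{le111} to initiate the iteration of \S\ref{s44}, which cascades the quadratic closeness down all dyadic scales and delivers $C^{1,\beta}$ for part (a); upgrading to $C^\alpha$ densities and using the refined estimate \eqref{taylor222}--\eqref{taylor333} together with the Schauder iteration of \cite[\S5]{CLW1} gives $C^{1,1}$ and then $C^{2,\alpha}$ for part (b); part (c) is then standard bootstrapping via \cite{GT}.

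The main technical obstacle I anticipate is in the limit analysis: unlike \S\ref{s41}, where the flat boundary was a genuine free boundary of a Monge--Amp\`ere measure supported on a convex target, here the flat ``wall" $\{x^n=0\}$ is an internal interface across which $u_\infty$ is merely $C^{0,1}$ with a jump in gradient. One must therefore justify that the boundary regularity of \cite{CLW1} applies separately to each $u_{i,\infty}$ on its own side, and that the obliqueness/uniform-density estimates needed to run that machinery are stable under the small, $d$-dependent perturbation caused by the curvature of the true free boundary $\mathcal{F}$. Once this stability is verified, the compactness-and-iteration scheme of \S\ref{S4} transfers essentially mechanically, with $\mathcal{F}$ playing the role of $\p U\cap\Om$, completing the proof.
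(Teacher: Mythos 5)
Your proposal matches the paper's approach: the paper itself only sketches the argument, observing that when $d=\dist(V_1,V_2)$ is large the free boundary $\mathcal{F}$ becomes nearly flat, and then says to run the §\ref{S4} perturbation machinery, which is precisely what you do. Your filling-in of the details is correct in substance; the one loose phrase is in the obliqueness sketch, where the relevant normal is that of $\partial V_i$ at the image point $Du_{i,\infty}(x_0)$ rather than of $\partial U_{i,\infty}$ along the flat wall (the latter is trivially $\pm e_n$), but this does not affect the overall scheme.
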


\bibliographystyle{amsplain}

\end{document}